\newtheorem{thm}{Theorem}[section]
\newtheorem{cor}[thm]{Corollary}
\newtheorem{lem}[thm]{Lemma}
\newtheorem{prop}[thm]{Proposition}
\theoremstyle{definition}
\newtheorem{defn}[thm]{Definition}
\theoremstyle{remark}
\newtheorem{rem}[thm]{ Remark}
\newtheorem{exe}[thm]{\bf Example}
\numberwithin{equation}{section}
\begin{document}
\title{Generalization of isomorphism theorems groups to Partial groups}

\author{Yahya N'dao and Adlene Ayadi}

 \address{Yahya N'dao, University of Moncton, Department of mathematics and statistics, Canada}
 \email{yahiandao@yahoo.fr }
\address{Adlene Ayadi, University of Gafsa, Faculty of sciences, Department of Mathematics,Gafsa, Tunisia.}

\email{adlenesoo@yahoo.com}

\thanks{This work is supported by the research unit: syst\`emes dynamiques et combinatoire:
99UR15-15} \subjclass[2000]{03C65,03C45, 03E47, 20N05,20N20}
\keywords{partial group, partial stability, law, algebraic
structure, group, isomorphism}

\begin{abstract}
In this paper, we define a new structure analogous to group,
called partial group. This structure concerns the partial
stability by the composition inner law. We generalize the three
isomorphism theorems for groups to partial groups.
\end{abstract}
\maketitle

\section{\bf Introduction }

A semigroup is an algebraic structure consisting of a set together
with an associative binary operation. A semigroup generalizes a
monoid in that a semigroup need not have an identity element. It
also (originally) generalized a group (a monoid with all inverses)
to a type where every element did not have to have an inverse,
thus the name semigroup(See \cite{CAH}, \cite{HJM}). Several other
algebraic structure generalize the notion of group as quasigroup,
hypergroup. In abstract algebra, a quasigroup is an algebraic
structure resembling a group in the sense that "division" is
always possible. Quasigroups differ from groups mainly in that
they need not be associative. A quasigroup with an identity
element is called a loop(see \cite{COH}, \cite{AMA}). In
\cite{MSK}, Kamran extended the notion of AG-groupoid to AG-group.
Later on the significant results on the topic were published in
\cite{QMS}. An AG-group is a generalization of abelian group and a
special case of quasigroup. The structure of AG-group is a very
interesting structure in which one has to play with brackets.
There is no commutativity or associativity in general.
\medskip

We are interested in this paper to introduce a lower structure
than those of groups structure, called \emph{partial group}. Most
of algebraic structures like groups, semigroups, quasigroup and
monoid  all contain an inner composition  law which is total (i.e.
the law acts on any element of a set $G$ and remains there). Order
our new structure is based on the partial action of this law.
There is a subset $E$ of $G$ which is stable by the law and it
admits the group structure. Firstly, the set G is chosen as a part
of a group $\Gamma$ and it is defined as a product (by the law of
$\Gamma$) of a subgroup $E$ of $\Gamma$ with a part D which is
free with $E$, that is to say $D$ is contained in a group
$\widetilde{D}$ satisfying $\Gamma = E.\widetilde{D}:=\{xd:\ \
x\in E,\ \ d\in \widetilde{D}\}$ and $\widetilde{D}\cap E= \{e\}$,
where $e$ is the identity element of $\Gamma$. We say that
$\widetilde{D}$ is supplement of $E$ in$\Gamma$. We define a
partial inner law $'.'$ in $G$ as follows: Let $a,b\in G$ and
write $a=xd$ and $b=yd'$ with $x,y\in E$ and $d,d'\in D$, one has:
$a. b=xy$. This is means that $$'.':\ G\times G \longrightarrow
E$$ $$\ \ \ \ \ \ \ \ \ \ \ \ \ \ \  (a, b)\longmapsto a.b=xy$$

For any $a=xd\in G$ with $x\in E$ and $d\in D$, denote by
$$Inv(a)=\{x^{-1}d:\ \ d\in D\}$$ the set of all inverses element
of $a$. An element of $Inv(a)$ is denoted by $a^{*}$.\ \\

 In a partial group, the definition of the identity element $e$
 differs from group as follow:
$$a.e=x \ \ \mathrm{for\  every} \ \ a=xd, \  x\in E\ \
\mathrm{and} \ \ d\in D$$ Moreover, we know that in a group every
element has an only inverse element (symmetric element) but for a
partial group we have "for every element $a$ of $G$, we have
$Inv(a)\neq\emptyset$".

 Secondly,  in the definition of subgroup, we replace the phrase  "every element of $H$ has an inverse"
 by  "every element $a\in H$, $Inv(a)$ meets $H$", and  we give by analogy, the definition of
partial subgroup $H$ of $G$ as follow: let $H\subset G$. Then $H$
is a \emph{partial subgroup} of $G$ if:\ \\ (i) $e\in H$\
\\
 (ii) $a.b \in H$ for every  $a,\ b \in H$ \ \\
 (iii) $Inv(a)\cap H \neq \emptyset$ for every $a\in H$.

\medskip

Define the right (resp. left) cosets of an element $a\in G$ by
$$aH=\{a.h:\ \ \ h\in H\},\ \ \ \ \ (\mathrm{resp}.\ \ Ha=\{h.a:\
\ \ h\in H\}).$$
 \ \\ \\ We define the normal partial subgroup
$N$ as follow:  $N$ is a normal partial subgroup, that is, $Na =
aN$ for all $a\in G$. The
 space $X$, quotient of $G$ by the normal partial subgroup $N$ is defined by the following  equivalence relation :
  $$a\ \sim_{N}\ b \ \ \ \Longleftrightarrow \\ \ \
\mathrm{there \ exists}\ \ h\in N\ \ \mathrm{such\ that} \ \
a.e=b.h$$
\medskip

A \emph{homomorphism} of partial group is a map $f: G_{1}\
\longrightarrow G_{2}$ between two partial groups $G_{1}$ and
$G_{2}$ and satisfying $f(a.b).e = f(a).f(b)$. In particular, if
$G_{1}=E_{1}D_{1}$ and  $G_{2}=E_{2}D_{2}$\ then $f(E_{1})\subset
E_{2}$ and $f_{/E_{1}}: E_{1}\longrightarrow E_{2}$ is a
homomorphism of groups.\ (see Lemma ~\ref{L:mm3}).
\\

By analogy, we will prove that the kernal $Ker(f)=\{a\in G_{1}:\ \
f(a)=e \}$ of $f$, is a normal partial subgroup of $G_{1}$.\ \\
Finally, we generalize the isomorphism's theorem for groups to
partial groups.

 Our principal results are
the following:
\medskip

\begin{thm}\label{t:1} (An image is a natural quotient). Let
$G_{1}=E_{1}D_{1}$, $G_{2}=E_{2}D_{2}$ be two partial groups and
$f : G_{1} \longrightarrow G_{2}$ be a partial group homomorphism.
Let its default kernel and the image be $K = f^{-1}(D_{2})$; $H
=\{f(a):\ \ a\in G_{1} \}$ ; respectively a normal partial
subgroup of $G_{1}$ and a partial subgroup of $G_{2}$. Then there
is a natural isomorphism $\widetilde{f} :\ G_{/K}\longrightarrow
H$; $aK \longmapsto f(a)$.
\end{thm}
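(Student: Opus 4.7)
My plan is to follow the template of the classical first isomorphism theorem, adapted to partial groups via the bookkeeping device of the decomposition $a=x_a d_a$ with $x_a\in E$, $d_a\in D$. The underlying fact I rely on throughout is Lemma~\ref{L:mm3}, which tells me that $f$ restricts to a group homomorphism $f|_{E_1}:E_1\to E_2$, so that writing $f(a)=f(x_a)\cdot d'_a$ with $f(x_a)\in E_2$ and $d'_a\in D_2$ translates partial-group computations on $G_1$ into ordinary group computations on $E_1$.

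First I would establish the structural claims. To show $K=f^{-1}(D_2)$ is a normal partial subgroup I verify the three axioms: $e_1\in K$ since $f(e_1)=e_2\in D_2$; closure under the partial law because $a,b\in K$ forces $f(x_a)=f(x_b)=e_2$ and hence $f(a.b)=f(x_a x_b)=e_2\in D_2$; and the inverse condition because any $a^*\in Inv(a)$ has $E$-part $x_a^{-1}$ with $f(x_a^{-1})=f(x_a)^{-1}=e_2$. Normality $aK=Ka$ then reduces, under the projection $a\mapsto a.e=x_a$, to normality of $K\cap E_1=\ker(f|_{E_1})$ in the group $E_1$, which is automatic. The proof that $H=f(G_1)$ is a partial subgroup of $G_2$ runs in parallel: $e_2=f(e_1)\in H$; $f(a).f(b)=f(a.b).e_2=f(a.b)\in H$ since $f(a.b)\in E_2$ is fixed by right-multiplication by $e_2$; and the image $f(a^*)$ of any $a^*\in Inv(a)$ lies in $Inv(f(a))\cap H$ after matching $E$-parts.

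The remaining step is to define $\tilde f(aK)=f(a)$ and check it is an isomorphism. For well-definedness, unfold $aK=bK$ as $a\sim_K b$, i.e.\ there is $h\in K$ with $a.e=b.h$, which reads $x_a=x_b x_h$ in $E_1$ with $f(x_h)=e_2$; applying $f|_{E_1}$ yields $f(x_a)=f(x_b)$, and comparing the decompositions $f(a)=f(x_a)d'_a$ and $f(b)=f(x_b)d'_b$ delivers $f(a)=f(b)$. The homomorphism identity is $\tilde f((aK).(bK)).e=f(a.b).e_2=f(a).f(b)$, mechanical from the definitions. Injectivity is the reverse of well-definedness: $f(a)=f(b)$ forces $f(x_a)=f(x_b)$, so $h:=x_b^{-1}x_a\in K$ witnesses $a\sim_K b$. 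Surjectivity is built into the definition of $H$.

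The main obstacle lies in the well-definedness/injectivity step, because $\sim_K$ as defined only constrains the $E$-parts $x_a,x_b$ of representatives, whereas the map $\tilde f$ must land unambiguously on $f(a)$ as an element of $G_2$. To close this gap one has to argue that $a\sim_K b$ already forces the $D_2$-components $d'_a,d'_b$ to coincide, i.e.\ that $f$ respects the chosen decomposition $G_i=E_iD_i$ finely enough that two representatives of the same $K$-coset are sent to the same point of $H$. Tying together the partial law, the decomposition, and the homomorphism property to extract this compatibility is the heart of the proof; the rest is a straightforward transcription of the classical argument.
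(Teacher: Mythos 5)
Your overall architecture is the same as the paper's: you first establish that $K=f^{-1}(D_{2})$ is a normal partial subgroup and $H=f(G_{1})$ a partial subgroup (the paper quotes its Proposition~\ref{p:06543} for this; your reduction of normality to normality of $\ker(f_{/E_{1}})$ in the group $E_{1}$ via Proposition~\ref{Lad:011}(iii) is a slightly cleaner route than the paper's direct computation of $f(a^{*}.b.a)$), and you then run the four standard checks on $aK\longmapsto f(a)$. The homomorphism, injectivity and surjectivity steps are fine as far as they go.

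The problem is the well-definedness step, which is a genuine gap and which you have not closed: your third paragraph asserts that comparing $f(a)=f(x_{a})d'_{a}$ with $f(b)=f(x_{b})d'_{b}$ ``delivers $f(a)=f(b)$'', and your fourth paragraph then concedes that this needs $d'_{a}=d'_{b}$, which you never prove. It is not provable from the axioms: the homomorphism condition $f(a.b).e=f(a).f(b)$ constrains only the $E_{2}$-part $f(a).e_{2}$ of $f(a)$ and says nothing about its $D_{2}$-component. Concretely, if $a=xd_{1}$ and $b=xd_{2}$ with $d_{1}\neq d_{2}\in D_{1}$, then $aK=bK$ (take $h=e$), yet nothing forces $f(xd_{1})=f(xd_{2})$; the two images need only share the $E_{2}$-part $f(x)$. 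So $aK\longmapsto f(a)$ is not a well-defined map into $G_{2}$. Be aware that the paper's own proof has the same hole: it correctly derives $f(a').e_{2}=f(a).f(k)=f(a).e_{2}$ and then silently drops the trailing ``$.e_{2}$'', i.e.\ it tacitly replaces $f(a)$ by its support part $f(a).e_{2}=f(a.e_{1})$. The repair --- which neither you nor the paper carries out --- is to define $\widetilde{f}(aK):=f(a).e_{2}$, note via Proposition~\ref{Lad:011}(iv) that $G_{/K}$ is really $E_{1}/\ker(f_{/E_{1}})$ and that the target must be the support $F_{2}=E_{2}\cap H$ (equivalently $H$ modulo the identification of Proposition~\ref{p:aa01}) rather than the set $H$ itself, and then invoke the classical first isomorphism theorem for the group homomorphism $f_{/E_{1}}$. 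As written, neither your argument nor the paper's establishes the theorem in the form stated, and your closing paragraph is an accurate diagnosis of exactly where the missing idea lies.
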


\bigskip
\begin{thm}\label{t:2}  Let $G$ be a partial group. Let $H$ be a partial subgroup of $G$ and let $K$ be a normal
partial subgroup of $G$. Then there is a natural isomorphism of
partial group $$ HK_{/K} \longrightarrow H_{/(H \cap K)}$$
$$aK\longmapsto a(H \cap K)$$
\end{thm}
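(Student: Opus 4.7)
The plan is to reduce Theorem~\ref{t:2} to Theorem~\ref{t:1} by the classical second-isomorphism argument: construct a surjective partial group homomorphism $\varphi : H \longrightarrow HK/K$ whose default kernel (in the sense of Theorem~\ref{t:1}) is $H\cap K$, then invoke Theorem~\ref{t:1}. Before doing so, I would handle some bookkeeping on the partial subgroups involved. Namely, (a) $HK$ is a partial subgroup of $G$: conditions (i)--(iii) for a partial subgroup are checked by using the normality equality $Ka = aK$ to rearrange products of the form $(h_{1}k_{1}).(h_{2}k_{2})$ back into $HK$, and by noting that $Inv(a)$ meets $HK$ for any $a \in HK$ by the same trick; (b) $K$ remains a normal partial subgroup of $HK$, which is immediate from $Ka = aK$ for every $a\in G$; and (c) $H\cap K$ is a normal partial subgroup of $H$, again by restricting $Ka = aK$ to $a\in H$ together with the elementary fact that the intersection of two partial subgroups is a partial subgroup.

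With those in hand, define
$$\varphi : H \longrightarrow HK/K, \qquad \varphi(a) = aK.$$
Three things then need to be checked. First, $\varphi$ is a partial group homomorphism: the canonical projection $\pi : HK \longrightarrow HK/K$ is a homomorphism by construction of the quotient by a normal partial subgroup, and $\varphi$ is just its restriction to $H\subset HK$, so the relation $\varphi(a.b).e = \varphi(a).\varphi(b)$ is inherited. Second, $\varphi$ is surjective: any element of $HK/K$ has a representative of the form $hk$ with $h\in H$ and $k\in K$, and under the equivalence $\sim_{K}$ we have $(hk)K = hK = \varphi(h)$. Third, $\varphi^{-1}(D_{HK/K}) = H\cap K$: unpacking the definition $a\sim_{K} b \Longleftrightarrow \exists\, h\in K,\ a.e = b.h$, an element $a\in H$ is sent by $\varphi$ into the $D$-component of $HK/K$ precisely when $a\in K$, hence when $a\in H\cap K$.

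Applying Theorem~\ref{t:1} to $\varphi$ then produces a natural isomorphism $H/(H\cap K) \longrightarrow HK/K$, $a(H\cap K) \longmapsto aK$. The statement of Theorem~\ref{t:2} is obtained by inverting this isomorphism, giving the desired map $aK \longmapsto a(H\cap K)$.

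The main obstacle will be the default-kernel computation. The paper's definition $Ker(f) = f^{-1}(D_{2})$ departs from the usual $f^{-1}(\{e\})$, and the quotient $HK/K$ is defined through an equivalence relation whose $E$-versus-$D$ content must be read off from the product decomposition $HK = E_{HK}\cdot D_{HK}$. Carefully identifying the $D$-part of the quotient, and verifying that its preimage under $\varphi$ is exactly $H\cap K$ (with containment in both directions), is the pivotal technical step on which the rest of the argument hinges; once it is established, Theorem~\ref{t:1} finishes the proof essentially for free.
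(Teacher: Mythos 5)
Your proposal follows essentially the same route as the paper: both construct the projection $H \longrightarrow HK_{/K}$, $a \longmapsto aK$, verify it is a surjective partial group homomorphism whose (default) kernel is $H\cap K$, and then invoke Theorem~\ref{t:1} and invert the resulting isomorphism. The only cosmetic difference is that the paper settles your ``pivotal'' default-kernel step by observing via Proposition~\ref{Lad:011} that $HK_{/K}=FF'_{/F'}$ has trivial defect $\{e\}$, so the default kernel coincides with the ordinary kernel, whereas you propose to unpack the $D$-component of the quotient directly.
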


\bigskip

\begin{thm}\label{t:3} (Absorption property of quotients). Let $G$ be a partial group. Let $K$ be a
normal partial subgroup of $G$, and let $N$ be a subgroup of $K$
that is also a normal partial subgroup of $G$. Then $K_{/N}$ is a
normal partial subgroup of $G_{/N}$; and there is a natural
isomorphism $$(G_{/N})_{/(K_{/N})} \longrightarrow G_{/K}:\  \
aN.(K_{/N}) \longmapsto aK$$
\end{thm}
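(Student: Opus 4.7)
My plan is to imitate the classical third isomorphism proof, but routed through Theorem \ref{t:1}, so that the three general ingredients -- normality of $K/N$ in $G/N$, surjectivity of a canonical projection, and identification of its default kernel -- do the work.

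First I would establish the structural preliminaries. Since $N$ is a normal partial subgroup of $G$ with $N\subset K$, the quotient $G/N$ is a partial group and the canonical map $\pi_{N}:G\longrightarrow G/N,\ a\longmapsto aN$, is a partial group homomorphism (by the construction of the quotient used in Theorems~\ref{t:1} and \ref{t:2}). I would then verify that $K/N:=\pi_{N}(K)=\{aN:\ a\in K\}$ is a partial subgroup of $G/N$, using conditions (i)--(iii) of the definition of partial subgroup together with $e\in N\subset K$ and the facts that $K$ is closed under the partial law and meets $Inv(a)$ for every $a\in K$. Normality of $K/N$ in $G/N$ reduces, via the equality $(aN)\cdot(K/N)=(aK)/N$ and the symmetric equality on the right, to the normality of $K$ in $G$; this is where the hypothesis that $K$ is itself normal is essential.

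Next I would define the candidate map
$$\varphi:\ G/N\ \longrightarrow\ G/K,\qquad aN\ \longmapsto\ aK.$$
Well-definedness follows from $N\subset K$: if $a\sim_{N}b$ then there exists $h\in N\subset K$ with $a.e=b.h$, whence $a\sim_{K}b$. To verify that $\varphi$ is a partial group homomorphism I must check $\varphi((aN).(bN)).e=\varphi(aN).\varphi(bN)$ in $G/K$; unpacking both sides against the partial law in $G/N$ and $G/K$, this is exactly the identity $((a.b).e)K=(aK).(bK)$, which holds because $\pi_{K}:G\longrightarrow G/K$ is itself a partial group homomorphism. Surjectivity of $\varphi$ is immediate since every class $aK$ is the image of $aN$.

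Finally I would apply Theorem \ref{t:1} to $\varphi$. The image of $\varphi$ is all of $G/K$, and the key computation is to identify the default kernel $\varphi^{-1}(D_{G/K})$ with $K/N$: an element $aN$ lies in the default kernel iff $aK$ belongs to the ``$D$-part'' of $G/K$, which, by the decomposition $G/K=(E_{G/K})(D_{G/K})$ used throughout the paper, is equivalent to $aK=K$, i.e.\ $a\in K$, i.e.\ $aN\in K/N$. Theorem \ref{t:1} then yields the natural isomorphism
$$\widetilde{\varphi}:\ (G/N)\big/(K/N)\ \longrightarrow\ G/K,\qquad aN\cdot(K/N)\ \longmapsto\ aK,$$
which is precisely the statement. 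I expect the main obstacle to be the bookkeeping in the last step, namely pinning down that the default kernel of $\varphi$ really is $K/N$ and not merely contained in it; every other step reduces cleanly to a property of $\pi_{N}$ or $\pi_{K}$ already available from the framework.
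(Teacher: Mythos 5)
Your proposal is correct, but it takes a genuinely different route from the paper. The paper's proof is a reduction to the classical third isomorphism theorem for groups: using Proposition~\ref{Lad:011}(iv) it identifies $G_{/N}=E_{/F'}$, $K_{/N}=F_{/F'}$ and $G_{/K}=E_{/F}$ as honest groups (here $E,F,F'$ are the supports of $G,K,N$), invokes the classical result for $(E_{/F'})_{/(F_{/F'})}\cong E_{/F}$, and then translates back via Proposition~\ref{Lad:011}(ii). You instead run the standard textbook argument intrinsically inside the partial-group formalism: build the projection $\varphi: G_{/N}\to G_{/K}$, check it is a surjective partial group homomorphism, identify its default kernel with $K_{/N}$, and apply Theorem~\ref{t:1}. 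Your route is longer but buys two things the paper's proof glosses over: it actually proves the first assertion of the theorem (that $K_{/N}$ is a normal partial subgroup of $G_{/N}$), and it exercises Theorem~\ref{t:1} rather than bypassing the partial structure entirely. One small imprecision worth tightening: since $G_{/K}$ is a group its defect is the single coset $K$, so the default kernel of $\varphi$ is $\{aN:\ aK=K\}$; the condition $aK=K$ is equivalent to $a.e\in F$, not literally to $a\in K$ (the $D$-component of $a$ need not lie in the defect of $K$), but since $aN=(a.e)N$ one still gets $aN\in K_{/N}$, so the identification of the default kernel with $K_{/N}$ goes through.
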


\bigskip

\section{{\bf Supplement subgroups and free subsets}}
\medskip

Let $\Gamma$ be a group. The composition law of $\Gamma$ is
denoted as simple multiplication. Let $E$ and $\widetilde{D}$ be
two subgroup of $\Gamma$. We say that $E$ and $\widetilde{D}$ are
\emph{supplement} in $\Gamma$ and denoted by $E_{\top}
\widetilde{D}=\Gamma$, if $E\cap \widetilde{D}=\{e\}$ and
$E.\widetilde{D}=\Gamma$, where $$E.\widetilde{D}=\{xd,\ \ x\in
E,\ d\in \widetilde{D}\}$$ and $e$ is the neutral element of
$\Gamma$. We say that $\Gamma$ is the internal direct product of
$E$ and $\widetilde{D}$. A subset $D\subset \Gamma$ is called
\emph{free} with $E$ in $\Gamma$ if $e\in D$ and there exists a
supplement $\widetilde{D}$ of $E$ in $\Gamma$ containing $D$.

\medskip

For example: \ \\ - if $\Gamma=\mathbb{Z}^{2}$,
$E=\{0\}\times\mathbb{Z}$ and
$\widetilde{D}=\mathbb{Z}\times\{0\}$ then
$E_{\top}\widetilde{D}=\Gamma$.\ \\ - if $\Gamma=\mathbb{Z}$,
$E=2\mathbb{Z}$ and $\widetilde{D}=3\mathbb{Z}$ then
$E_{\top}\widetilde{D}=\Gamma$.
\medskip

\begin{prop}\label{p:00101} Let $E$ be a subgroup of $\Gamma$ and $D$ be a free subset of $\Gamma$ with
$E$. Denote by $E.D:=\{xd,\ \ x\in E,\ d\in D\}$. Then:\
\\ (i) for every $a\in E.D$ there exist only one $x\in E$ and only
one $d\in D$ such that $a=xd$.\ \\ (ii) there exists an
equivalence relation in $E.D$ defined by: $$a\sim b\ \
\Longleftrightarrow \ \mathrm{there \ exists}\ \ x\in E\ \ \
\mathrm{such\ that}\ \ \ x^{-1}a,\ x^{-1}b\in D$$
\end{prop}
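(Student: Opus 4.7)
The plan is to prove (i) by invoking the supplement structure guaranteed by the freeness of $D$, and then to reduce (ii) to a simple comparison of $E$-components using (i).

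For part (i), since $D$ is free with $E$, there exists a supplement $\widetilde{D}$ of $E$ in $\Gamma$ with $D \subset \widetilde{D}$, so $E \cap \widetilde{D} = \{e\}$. Suppose $a = xd = x'd'$ with $x,x' \in E$ and $d,d' \in D \subset \widetilde{D}$. Multiplying on the left by $(x')^{-1}$ and on the right by $d^{-1}$ gives $(x')^{-1}x = d'd^{-1}$. The left side lies in $E$ and the right side in $\widetilde{D}$, so both equal $e$. Hence $x = x'$ and $d = d'$. This is routine once one has the supplement, and I do not expect any difficulty here.

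For part (ii), the key observation I would establish first is the following reformulation: $a \sim b$ if and only if the unique $E$-components of $a$ and $b$ (given by (i)) coincide. Indeed, if $a = x_a d_a$ is the unique decomposition and $x^{-1}a \in D$ for some $x \in E$, then $a = x \cdot (x^{-1}a)$ is another decomposition of $a$ as an element of $E \cdot D$, so by uniqueness $x = x_a$. Applying the same argument to $b$, the condition $x^{-1}a, x^{-1}b \in D$ forces $x = x_a = x_b$. Conversely, if $x_a = x_b$, then $x := x_a$ witnesses $a \sim b$.

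Once this characterization is in place, reflexivity, symmetry, and transitivity of $\sim$ are immediate: reflexivity because $x_a = x_a$, symmetry because the relation $x_a = x_b$ is symmetric, and transitivity because equality of $E$-components is transitive. The only real step is the reduction to the $E$-component, and that step relies crucially on the uniqueness established in (i); I expect this to be the main (but still minor) obstacle, as one must resist trying to prove transitivity directly by manipulating the witnesses $x$ and $y$ from the two hypothesized instances of $\sim$ without invoking (i).
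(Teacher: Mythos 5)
Your proof is correct and takes essentially the same route as the paper: part (i) via the supplement $\widetilde{D}\supset D$ and $E\cap\widetilde{D}=\{e\}$, and part (ii) by using the uniqueness from (i) to force any witness $x$ to equal the $E$-component. The only difference is organizational: you isolate the characterization ``$a\sim b$ iff $x_a=x_b$'' up front and get all three axioms at once, whereas the paper checks reflexivity and symmetry directly and invokes the uniqueness argument only inside the transitivity step.
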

\medskip

\begin{proof} (i) Let $a\in E.\widetilde{D}$. If  $a=xd=x'd'$ with $x,x'\in E$ and
$d,d'\in\widetilde{D}$. Then $x^{-1}x=dd'^{-1}\in E\cap
\widetilde{D}$. It follows that $x=x'$ and $d=d'$ since
$E\cap\widetilde{D}=\{e\}$.\medskip

\ \\ (ii) - \emph{Reflexivity:} $a\sim a$ since if $a=xd$ with
$x\in E$ and $d\in \widetilde{D}$. Then
$x^{-1}a=x^{-1}a=d\in\widetilde{D}$.\ \\ - \emph{Symmetry:} $a\sim
b$ then $b\sim a$ (it is obvious).\ \\ - \emph{Transitivity:} If
$a\sim b$ and $b\sim c$ then there exist $x,y\in E$ such that
$x^{-1}a,\ x^{-1}b\in\widetilde{D}$ and $y^{-1}b,\
y^{-1}c\in\widetilde{D}$. It follows that $b=xd=yd'$ for some
$d,d'\in\widetilde{D}$. By (i) we have $x=y$. Hence,$x^{-1}a,\
x^{-1}c\in \widetilde{D}$ and so $a\sim c$. \ \\ We conclude that
$\sim$ is an equivalence relation.
\end{proof}
\medskip
\medskip

\section{{\bf Partial inner law and partial group}}
\medskip

Let $\Gamma$ be a group, $E$ be a subgroup of $\Gamma$ and
$D\subset \Gamma$ be a subset free with $E$ in $\Gamma$ and
containing the neutral element $e$.  Denote by $G:=E.D=\{xd,\ \
x\in E,\ d\in D\}$. We define a partial inner law $'.'$ in $G$ as
follows: Let $a,b\in G$ and write $a=xd$ and $b=yd'$ with $x,y\in
E$ and $d,d'\in D$: $$a. b=xy$$ This means that $$a. b
=\left\{\begin{array}{c}
  xy, \ \ \mathrm{if}\ \ a,b\notin E\cup D\ \ \ \ \ \ \ \  \ \  \\
  ab, \ \ \ \mathrm{if}\ \ a,b\in E \ \ \ \ \ \ \ \ \ \ \ \ \ \ \ \ \\
  a,\ \ \ \mathrm{if}\ \ a\in E \ \  \ \mathrm{and}\ \ \ b\in D \\
   b,\ \ \mathrm{if}\ \ b\in E \ \  \ \mathrm{and}\ \ \ a\in D \\
   e,\ \ \ \mathrm{if}\ \ a, b\in D \ \ \ \ \ \  \ \ \  \ \ \ \  \ \ \ \\
\end{array}\right.$$
\medskip

We obtain also $$a. e=\left\{\begin{array}{c}
  x, \ \ \ \mathrm{if}\ \ a\notin D \ \ \ \ \ \ \ \ \ \ \ \ \ \ \ \ \\
  e,\ \ \ \mathrm{if}\ \ a\in D \ \ \ \ \ \  \ \ \  \ \ \ \  \ \ \ \\
\end{array}\right.$$
\medskip

We can write also if $a=xd$ with $x\in E$ and $d\in D$ then
$$a=(a.e)d$$
\medskip

\begin{defn}\label{df:1} $(G,\ .)$ is a partial group with support the group $E$ and
defect the set $D\subset (G\backslash E)\cup\{e\}$
 (where $e$ is the neutral element of $E$), if:\ \\
(i) $G=E.D=\{xd,\ \ x\in E,\ d\in D\}$\ \\ (ii) $e\in
 D$\ \\ (iii) For every $a,b\in G$, one has $a.
b=xy$, where  $a=xd$ and $b=yd'$ with $x,y\in E$ and $d,d'\in D$.
\end{defn}
\medskip

In all the following, we denote by $G=E.D$ any partial group with
support $E$ and defect $D$.
\medskip

\begin{prop}\label{p:1} Let $(G,\  .)$ be a partial group with support the group $E$
and defect the set $D$. Then:\ \\ (i) $a.b\in E$ for every $a,b\in
G$
\
\\ (ii) $a.e=x$ for every $a=xd\in G$ \ \\ (iii)  $d.d'=e$ for
every $d,d'\in D$ \ \\ (iii) $(a.b).c=a.(b.c)=a.b.c$ for every
$a,b,c\in G$ (i.e. the partial law $(.)$ is associative)\ \\ (iv)
$x.d.d'=d.x.d'=d.d'.x=x$ for every $x\in E$, $d,d'\in D$\ \\ (v)
$a.e=a$ if and only if $a\in E$
\end{prop}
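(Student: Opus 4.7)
My plan is to handle every item uniformly: for each element $a\in G$, fix the unique pair $(x,d)\in E\times D$ with $a=xd$ guaranteed by Proposition~\ref{p:00101}(i), and then evaluate the partial law by the formula $a.b = xy$ from Definition~\ref{df:1}(iii). The only subtlety will be to distinguish the ambient product in $\Gamma$ (written by juxtaposition) from the partial product in $G$ (the dot), and to correctly identify the $E$-component and $D$-component of elements that happen to lie in $E$ or in $D$. In particular I will use repeatedly that for $x\in E$ the decomposition is $x = x\cdot e$ (so the $E$-part is $x$, the $D$-part is $e$), and for $d\in D$ the decomposition is $d = e\cdot d$ (so the $E$-part is $e$).

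For (i), I simply observe that $a.b=xy$ lies in $E$ because $E$ is a subgroup of $\Gamma$. For (ii) and the first clause labelled (iii), the $E$-part of $e$ is $e$, so $a.e = x\cdot e = x$ for every $a=xd$, while for $d,d'\in D$ both $E$-parts are $e$ and hence $d.d' = e\cdot e = e$.

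For the associativity clause, the essential observation is that $a.b\in E$ by (i), so the decomposition of $a.b$ in $E.D$ is $(xy)\cdot e$, whose $E$-part is $xy$. Thus $(a.b).c$ reduces to $(xy)z$ in $\Gamma$, while $a.(b.c)$ reduces to $x(yz)$, and these coincide by associativity in the ambient group $\Gamma$. The same bookkeeping handles (iv): each factor drawn from $D$ contributes $E$-part $e$, so inserting or permuting such factors around $x\in E$ leaves the $E$-part unchanged, and all three expressions $x.d.d'$, $d.x.d'$, $d.d'.x$ collapse, step by step, to $x\cdot e\cdot e = x$. Finally, for (v), if $a\in E$ then the decomposition $a=a\cdot e$ gives $a.e = a\cdot e = a$; conversely, if $a.e = a$ then $a\in E$ by (i).

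The main obstacle is bookkeeping rather than mathematical content: the five cases in the definition of the partial law must be handled coherently by always passing through the unique $(x,d)$ decomposition supplied by Proposition~\ref{p:00101}(i), rather than by splitting each equality into all the subcases of the definition. Once this viewpoint is adopted, every item reduces to an elementary computation in $\Gamma$.
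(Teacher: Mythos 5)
Your proposal is correct and follows essentially the same route as the paper: decompose each element uniquely as $xd$ with $x\in E$, $d\in D$ (using that $x=xe$ and $d=ed$ for elements of $E$ and $D$ respectively), evaluate the dot product as the product of $E$-parts in $\Gamma$, and derive associativity and the remaining identities from associativity in $\Gamma$, with (i) supplying the converse direction of (v). No gaps.
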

\medskip

\begin{proof} Let $a,b,c\in G$ and write $a=xd,\ b=yd'$ and $c=zd"$
with $x,y,z\in E$ and $d,d',d"\in D$. \ \\ \\ Assertion (iii): We
have $(a.b).c=(xy).zd"=xyz$, $a.(b.c)=xd.(yz)=(xd).(yze)=xyz$ and
$a.b.c =(xd).(yd').(zd")=xyz$.\ \\ \\ Assertion (i) follows
directly from the definition.\ \\ \\ Assertion (ii): We have
$a.e=(xd).(ee)=xe=x$. \ \\ \\ Assertion (iv): By (iii), we have
$x.d.d'=(xe).(ed).(ed')=xee=x$, $d.x.d'=(ed).(xe).(ed')=exe=x$ and
$d.d'.x=(ed).(ed').(xe)=eex=x$. \ \\ \\ Assertion (v):  If $a.e=a$
then $a\in E$ by definition. Conversely, if $a\in E$ then $a=ae$
then $a.e=(ae).(ee)=ae=a$.
\end{proof}
\medskip

\begin{prop}\label{p:2} Let $(G,\ .)$ be a partial group with support the group $E$ and
 defect the set $D$. Then:\ \\ (i) $a.b=e$ if and only if
$y=x^{-1}$, for every $a=xd, b=yd'\in G$ ($x,y\in E$ and $d,d'\in
D$)\ \\ (ii) for every $a=xd\in G$ the set of all  reverse points
of $a$ is $Inv(a):=\{x^{-1}d',\ d'\in D\}$. \
\\ (iii) $\underset{n-times}{\underbrace{a.\dots. a}}=x^n$ for every $a=xd\in G$,
($x\in E$ and $d\in D$)
\end{prop}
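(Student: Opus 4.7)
The plan is to peel off each assertion in turn, exploiting the explicit formula $a.b=xy$ furnished by Definition~\ref{df:1}(iii) so that every question about the partial law inside $G$ is converted into a question about the genuine group law inside the support $E$.

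For (i), I would simply observe that with $a=xd$ and $b=yd'$ we have $a.b=xy$ by definition, and this element lies in $E$. Since $e$ is the neutral element of the group $E$, the equality $xy=e$ is equivalent to $y=x^{-1}$, where the inverse is taken in $E$. That is the whole content of (i); no case analysis on whether the factors lie in $E$ or $D$ is needed because the product is already collapsed to $xy$.

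For (ii), the argument is essentially a corollary of (i). A reverse point $a^{\ast}$ of $a=xd$ is by definition an element $b=yd'\in G$ with $a.b=e$ (and, symmetrically, $b.a=e$; but by (i) both conditions reduce to $y=x^{-1}$, so they coincide). Applying (i) then shows that the reverse points are exactly those $b$ whose $E$-component is $x^{-1}$, while the $D$-component $d'$ is free to be any element of $D$. This yields $Inv(a)=\{x^{-1}d':\ d'\in D\}$ as claimed.

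For (iii), I would proceed by induction on $n\ge 2$. The base case $n=2$ is immediate: $a.a=xy$ with $y=x$, hence $a.a=x^{2}$. For the induction step, assume $\underbrace{a.\cdots.a}_{n\text{-times}}=x^{n}$. Since $x^{n}\in E$ we may write $x^{n}=x^{n}\cdot e$ with $e\in D$, so Definition~\ref{df:1}(iii) gives $x^{n}.a=x^{n}\cdot x=x^{n+1}$. Associativity of the partial law (Proposition~\ref{p:1}(iii)) lets me regroup the $(n+1)$-fold product as $(\underbrace{a.\cdots.a}_{n}).a$, which by the inductive hypothesis equals $x^{n}.a=x^{n+1}$. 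No step is especially delicate; the only mild trap is in (iii), where one must remember that after the first multiplication the result already lies in $E$, so subsequent products reduce to ordinary group products in $E$ rather than ever triggering the $D\times D\to\{e\}$ clause.
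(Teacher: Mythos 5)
Your proof is correct and takes essentially the same route as the paper: parts (i) and (ii) are reduced to the group law in the support $E$ exactly as in the paper's own argument, and for (iii) the paper likewise appeals to the associativity of Proposition~\ref{p:1} to collapse the $n$-fold product to the product of the $E$-components, giving $x^{n}$. Your explicit induction in (iii) merely spells out that collapse a little more carefully than the paper does; it is not a different method.
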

\medskip

\begin{proof} Let $a,b,c\in G$ and write $a=xd,\ b=yd'$ and $c=zd"$
with $x,y,z\in E$ and $d,d',d"\in D$. \ \\  (i) We have
$a.b=(xd).(yd')=xy=e$ so $y=x^{-1}$. Conversely, if $y=x^{-1}$
then $a.b=(xd).(x^{-1}d')=xx^{-1}=e$.\ \\ (ii) Let $b\in G$. The
proof results directly from (i) because one has $a.b=e$ if and
only if $b=x^{-1}d'$ with $d'\in D$.\ \\ (iii) By
Proposition~\ref{p:1}.(iii), we have
$\underset{n-times}{\underbrace{a.\dots.
a}}=\underset{n-times}{\underbrace{(xd).\dots.
(xd)}}=\underset{n-times}{\underbrace{x\dots x}}=x^n$.
\end{proof}
\medskip

\begin{exe}  We give two trivial examples:\ \\ (i) Every group $E$ is a partial group with support
$E$ and defect $D=\{e\}$\ \\ (ii) Every subset $D$ of a group
$\Gamma$, containing the identity element $e$ of $\Gamma$, is a
partial group with support $E=\{e\}$ and  defect $D$.
\end{exe}
\medskip

\begin{prop}\label{p:3} Let $H$ be a group, $K$ and $L$ are two subgroups of
$H$. Suppose that $L$ is free with $K$ in $H$. Then: \ \\ (i) The
set $G:=K.L=\{xd,\ x\in K,\ d\in L\}$ is a partial group with
support $K$ and defect $L$. We say that $G$ is \emph{melted} in
$K$. \ \\ (ii) The set $G':=\{e\}.K=\{ed,\ d\in K\}$ is a partial
group with support $\{e\}$ and defect $K$. We say that $K$ is
\emph{totally melted} in $\{e\}$.
\end{prop}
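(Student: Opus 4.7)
The plan is to verify in each case that the triple $(G,\cdot)$ satisfies the three axioms of Definition~\ref{df:1}, relying on Proposition~\ref{p:00101}(i) to see that the partial law is well-defined, and on the freeness hypothesis to get the defect condition $D\subset (G\setminus E)\cup\{e\}$.

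For part (i), since $L$ is free with $K$ in $H$, the definition provides a supplement $\widetilde{L}$ of $K$ in $H$ with $L\subseteq\widetilde{L}$, and in particular $e\in L$. I would check the three axioms of Definition~\ref{df:1} in turn. Axiom (i) $G=K.L$ is the definition of $G$. Axiom (ii) $e\in L$ holds by freeness. For axiom (iii), the crucial observation is that $K\cap\widetilde{L}=\{e\}$, so $K\cap L\subseteq K\cap\widetilde{L}=\{e\}$; this guarantees that $L\setminus\{e\}\subset G\setminus K$, i.e.\ $L\subset (G\setminus K)\cup\{e\}$, which is the implicit defect requirement. Then, given any $a,b\in G$, Proposition~\ref{p:00101}(i) applied to the supplement $\widetilde{L}$ shows that the decompositions $a=xd$ and $b=yd'$ with $x,y\in K$, $d,d'\in L$ are unique, so the assignment $a.b:=xy$ is a well-defined map into $K$, matching the formula of Definition~\ref{df:1}(iii).

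For part (ii), I view it as a degenerate instance of part (i): take the ambient group to be $K$ itself, with subgroup $E=\{e\}$ and subset $D=K$. Then $K$ is trivially free with $\{e\}$ in $K$ (the supplement $\widetilde{D}=K$ works, and $\{e\}\cap K=\{e\}$), so (i) applies and gives the partial group structure on $G'=\{e\}.K$ with support $\{e\}$ and defect $K$. One could equivalently verify the three axioms directly: $G'=\{e\}.K$ by definition, $e\in K$ since $K$ is a subgroup, $K\subset (G'\setminus\{e\})\cup\{e\}$ trivially, and for $a=ed,\ b=ed'$ the product $a.b=e\cdot e=e$ is unambiguously defined.

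There is no real obstacle here; the statement is a direct unpacking of Definitions~\ref{df:1} and of ``free subset''. The only point worth highlighting in the write-up is the use of Proposition~\ref{p:00101}(i), through the chosen supplement $\widetilde{L}$, to guarantee uniqueness of the decomposition so that the partial law $a.b=xy$ is well-defined.
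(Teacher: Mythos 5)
Your proof is correct and takes the same route as the paper, which simply states that the result ``follows directly from the definition''; your write-up is a careful unpacking of exactly that, with the freeness hypothesis and Proposition~\ref{p:00101}(i) supplying the well-definedness of the partial law. Nothing further is needed.
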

\medskip

\begin{proof} The proof follows directly from the definition.
\end{proof}
\medskip

\begin{prop}\label{p:aa01} The quotient space $X$ of $E.D$ by the
above equivalence relation is a group isomorphic to $E$.
\end{prop}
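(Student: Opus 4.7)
The plan is to identify the equivalence classes of $\sim$ concretely with the elements of $E$, and then to transport the group structure of $E$ to the quotient $X$ through this identification.

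First, I would describe the equivalence classes explicitly. Using the uniqueness of the decomposition from Proposition~\ref{p:00101}(i), I claim that for $a = xd$ and $b = x'd'$ in $E.D$ (with $x, x' \in E$ and $d, d' \in D$), one has $a \sim b$ if and only if $x = x'$. Indeed, if $a \sim b$ via some $y \in E$, then $y^{-1}a = (y^{-1}x)d \in \widetilde{D}$; since $\widetilde{D}\cap E = \{e\}$ and the decomposition in $E \cdot \widetilde{D}$ is unique, this forces $y^{-1}x = e$, i.e.\ $y = x$. Symmetrically $y = x'$, hence $x = x'$. The converse is immediate by choosing $y = x$.

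Consequently, the equivalence class of $a = xd$ is exactly $\overline{a} = xD := \{xd' : d' \in D\}$, and the map $\varphi : X \longrightarrow E$ given by $\varphi(xD) = x$ is a well-defined bijection. I would then transport the group operation by defining $\overline{a}\cdot\overline{b} := \overline{a.b}$, where $.$ is the partial law of $G$. Since the partial law gives $a.b = xy \in E$ which depends only on the $E$-components $x$ and $y$ (and not on the defect parts $d, d'$), this operation is well-defined on $X$, and by construction $\varphi(\overline{a}\cdot\overline{b}) = xy = \varphi(\overline{a})\,\varphi(\overline{b})$.

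Thus $\varphi$ transports the induced quotient operation on $X$ to the group law of $E$. The group axioms on $X$ (associativity, existence of identity $\overline{e}$, existence of inverses $\overline{x^{-1}d}$) then follow either directly by transport through $\varphi$, or from Propositions~\ref{p:1} and \ref{p:2} which already provide all the relevant computations at the level of the partial law. The only point requiring genuine care is the well-definedness of the quotient law, but this reduces to the basic observation that in a partial group the product $a.b$ is insensitive to the defect components, which is immediate from Definition~\ref{df:1}(iii). Hence $\varphi$ is the desired isomorphism of groups $X \cong E$.
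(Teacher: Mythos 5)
Your proof is correct and follows essentially the same route as the paper: both identify each equivalence class of $a=xd$ with its support component $x$, define the quotient law by $\overline{a}*\overline{b}=\overline{a.b}$, and exhibit a bijective homomorphism between $X$ and $E$ (the paper uses the projection $\pi:E\to X$, you use its inverse $\varphi:X\to E$). Your explicit characterization of the classes as $\overline{a}=xD$ via the uniqueness of the decomposition is a welcome sharpening of the paper's brief ``see that $\overline{a}=\overline{x}$,'' but it is the same argument.
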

\medskip

\begin{proof} Denote by $\overline{a}=\{b\in E.D:\ \ a\sim b\}$ the
equivalence classes of a point $a\in E.D$. Write $a=x.d$ with
$x\in E$ and $d\in D$. See that $\overline{a}=\overline{x}$. This
means that $X=(E.D)_{/\sim}=E_{/\sim}$. Define the inner law '$*$'
on $X$ by: $$\overline{a}*\overline{b}=\overline{a.b}$$ We have
$(X,\ *)$ is a group; Indeed:\ \\ - '$*$' is an inner law in $X$.\
\\ - '$*$' has $\overline{e}$ as the neutrally element, since
$\overline{e}*\overline{a}=\overline{a}*\overline{e}=\overline{a.e}=\overline{x}=\overline{a}$
for every $a=xd\in E.D$.\ \\ - The inverse of any element
$\overline{a}=\overline{x}$ is $\overline{x^{-1}}$, since
$\overline{x}*\overline{x^{-1}}=\overline{x^{-1}}*\overline{x}=\overline{x.x^{-1}}=\overline{e}.$\
\\ - '$*$' is associative since is the law '$.$' of $E$.\ \\ \\
Denote by $\pi: E \longrightarrow X$ be the canonical projection
given by $\pi(x)=\overline{x}$. The projection $\pi$ is a
homomorphism of groups by construction.\ \\ Now, $\pi$ is
injective: $\pi(x)=\overline{e}$ implies that
$\overline{x}=\overline{e}$ so $x\in D\cap E$. As $E$ and $D$ are
free, $x=e$.\ \\ $\pi$ is surjective by construction. It follows
that $\pi$ is an isomorphism of group.
\end{proof}

\medskip

\section{{\bf Partial subgroup of a partial group}}
In words, this tells us that for a subset to be a partial
subgroup, it must be nonempty and contain the products and
inverses of all its elements.

\begin{defn} Let $G$ be a partial group with partial neurtral element $e$ and $H\subset G$. Then $H$ is a \emph{partial subgroup} of $G$ if:\ \\
(i) $e\in H$\ \\
 (ii) $a.b \in H$ for every  $a,\ b \in H$ \ \\
 (iii) $Inv(h)\cap H \neq \emptyset$ for every $h\in H$
\end{defn}
\medskip

\begin{rem} If $G$ is a group then it is a partial group with
defect $\{e\}$. Moreover, any subgroup $H$ of $G$ is also a
partial subgroup of $G$ with defect $\{e\}$.
\end{rem}
\medskip

\begin{prop}\label{p:4} Let $H$ be a partial subgroup of the
partial group $G$ with support $E$ and defect $D$. Then $H=F.D'$
with $F$ is a subgroup of $E$ and defect $D'\subset D$.
\end{prop}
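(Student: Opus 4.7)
My plan is to extract canonical candidates for $F$ and $D'$ from $H$, verify their algebraic properties, and then identify $H$ with $F.D'$. I would set $F := H\cap E$ and $D' := \{d\in D : \exists\, x\in E,\ xd\in H\}$. Axiom~(i) together with $e\in E\cap D$ gives $e\in F$ and $e\in D'$, while $D'\subseteq D$ is built into the definition.

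The first block of work is to show that $F$ is a subgroup of $E$ and that $H\subseteq F.D'$. Closure of $F$ reduces to the group law of $E$: for $x,y\in F$, the partial law gives $x.y = xy\in H$ by axiom~(ii), and since $xy\in E$ one has $xy\in F$. For inverses, axiom~(iii) applied to $x\in F$ produces some $x^{-1}d'\in H$; then $(x^{-1}d').e = x^{-1}$ by Proposition~\ref{p:1}(ii), and axiom~(ii) with $e\in H$ places $x^{-1}\in H\cap E = F$. For the inclusion $H\subseteq F.D'$, any $h\in H$ factors uniquely as $h = xd$ by Proposition~\ref{p:00101}(i); the identity $h.e = x$ together with axiom~(ii) puts $x$ in $F$, and $d\in D'$ by construction.

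The reverse inclusion $F.D'\subseteq H$ is the step I expect to be the main obstacle. Given $x\in F$ and $d\in D'$, one must realise the ambient-group product $xd$ as an element of $H$, and the partial law is of no direct help here: every value $a.b$ lies in $E$, so no iterated partial product of elements of $H$ can reconstitute an element of $G\setminus E$. My approach would be to pick a witness $x_0 d\in H$ for $d\in D'$ (so that $x_0 = (x_0d).e\in F$), observe that $xx_0^{-1}\in F\subseteq H$, and then apply axiom~(iii) to $x_0 d$, whose inverse set $\{x_0^{-1}\delta : \delta\in D\}$ sweeps over every possible defect. The hope is that varying the choice of inverse element lying in $H$, combined with the already-available elements $x$, $x_0$, and $x_0 d$, pins down $xd$ as belonging to $H$. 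Handling this pivot between different defect choices is the delicate combinatorial heart of the argument, and it is the place where the interaction between axiom~(iii) and the support/defect decomposition of $G$ must be used most carefully.
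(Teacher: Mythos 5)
Your first block is sound: with $F:=H\cap E$ the closure and inverse arguments go through exactly as you describe (the paper instead sets $D':=Inv(e)\cap H=D\cap H$ and defines $F$ as the set of supports occurring in $H$, but this is only a cosmetic difference), and $H\subseteq F.D'$ follows from unique factorization plus $h.e\in H$. The genuine problem is exactly the one you flag: the reverse inclusion $F.D'\subseteq H$ does not follow from axioms (i)--(iii), and no amount of pivoting among the witnesses supplied by axiom (iii) will rescue it, because the statement is false under the paper's definitions. Concretely, take $\Gamma=\mathbb{Z}^{2}$ (written additively), $E=\{0\}\times\mathbb{Z}$, $\widetilde{D}=\mathbb{Z}\times\{0\}$, $D=\{(0,0),(1,0)\}$, so $G=E.D=\{0,1\}\times\mathbb{Z}$ and the partial law is $(i,m).(j,n)=(0,m+n)$. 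Let $H=(\{0\}\times\mathbb{Z})\cup\{(1,5)\}$. Then $e=(0,0)\in H$; every product $a.b$ lands in $\{0\}\times\mathbb{Z}\subseteq H$; and $Inv((i,m))=\{(j,-m):j\in\{0,1\}\}$ meets $H$ at $(0,-m)$. So $H$ is a partial subgroup. But if $H=F.D'$, unique factorization forces $F=\{0\}\times\mathbb{Z}$ and $(1,0)\in D'$ (since $(1,5)=(0,5)+(1,0)$), whence $(1,7)=(0,7)+(1,0)\in F.D'\setminus H$. The obstruction is structural: axiom (ii) only ever produces elements of $E$, and axiom (iii) only guarantees \emph{one} inverse in $H$, so nothing in the axioms ties the defect appearing in one element of $H$ to the supports of the other elements.

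You should also be aware that the paper's own proof does not close this gap either: it writes ``We can verify that $F=E\cap H$ and that $F$ is a subgroup of $E$'' and never verifies $F.D'\subseteq H$ at all (and with the paper's choice $D'=D\cap H$ the failure in the example above is even more immediate, since there $D\cap H=\{(0,0)\}$ and $F.D'=E\neq H$). So your instinct that this step is ``the delicate combinatorial heart'' is correct, but the honest conclusion is that the proposition needs a stronger hypothesis (for instance, requiring $Inv(h)\subseteq H$ for all $h\in H$, or requiring $H$ to be saturated in the sense that $xd\in H$ and $x'\in H\cap E$ imply $x'd\in H$) rather than a cleverer argument.
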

\medskip

\begin{proof} By definition we have  $Inv(h)\cap H\neq\emptyset$ for every $h\in
H$, in particular $Inv(e)\cap H\neq\emptyset$. Denote by
$D'=Inv(e)\cap H$, so $e\in D'$. Since $Inv(e)=\{e^{-1}d,\ \ d\in
D \}=D$ then $D'= D\cap H\subset D$. Now, denote by $$F=\{x\in E,\
\ \mathrm{ for\ which }\ \mathrm{there\ is} \ d\in D \ \
\mathrm{such \ that}\ xd\in H\}.$$ We can verify that $F=E\cap H$
and that $F$ is a subgroup of $E$.
\end{proof}
\medskip

\begin{cor}\label{C:1} Let $G$ be a partial group with support $E$ and defect $D$ and $H$ be a
partial subgroup of $G$ with support $F$ and defect $D'$. Then:\
\\ (i) $F=E\cap H$  \ \\ (ii) $D'=D\cap H$
\end{cor}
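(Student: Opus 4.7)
The plan is to leverage the uniqueness of the $E.D$--decomposition (Proposition~\ref{p:00101}(i)) together with the fact, already established in the proof of Proposition~\ref{p:4}, that $F\subset E$ is a subgroup and $D'\subset D$. Since $H$ is assumed to be a partial group with support $F$ and defect $D'$, every element of $H$ can be written uniquely as $h = xd$ with $x \in F$, $d \in D'$; but since $F\subset E$ and $D'\subset D$, the same expression is also a valid $E.D$--decomposition of $h$ in $G$, and Proposition~\ref{p:00101}(i) guarantees these are the same decomposition.

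For (i), I would prove the two inclusions separately. The inclusion $F\subset E\cap H$ is immediate: any $x\in F$ satisfies $x = x\cdot e$ with $e\in D'$, so $x\in F.D' = H$, and $x\in E$ since $F\subset E$. For the reverse inclusion, take $a\in E\cap H$. Because $a\in H = F.D'$, write $a = xd$ with $x\in F$ and $d\in D'$. But because $a\in E$, we also have $a = a\cdot e$ with $a\in E$ and $e\in D$. Both $(x,d)$ and $(a,e)$ are admissible $E.D$--decompositions of $a$, so the uniqueness from Proposition~\ref{p:00101}(i) forces $x = a$ and $d = e$; hence $a = x \in F$.

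For (ii), the argument is symmetric. If $d\in D'$ then $d = e\cdot d$ with $e\in F$, so $d\in F.D'=H$ and obviously $d\in D$. Conversely, if $d\in D\cap H$, writing $d = xd''$ with $x\in F\subset E$ and $d''\in D'\subset D$, and comparing with the trivial decomposition $d = e\cdot d$ (using $e\in E$ and $d\in D$), uniqueness yields $x = e$ and $d'' = d$, so $d\in D'$.

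The only subtle point—and really the heart of the argument—is confirming that the uniqueness lemma applies in the form I need: I must check that any element of the partial group $G=E.D$ admits a \emph{unique} decomposition even when viewed through the sub--decomposition coming from $H=F.D'$. This is fine because $F\subset E$ and $D'\subset D$ place both decompositions inside the unique $E.\widetilde{D}$--decomposition of Proposition~\ref{p:00101}(i). Once this is observed, the corollary reduces to a two--line bookkeeping argument on each of the two inclusions.
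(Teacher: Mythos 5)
Your proof is correct and follows essentially the same route as the paper's: both directions of each equality are obtained from $F\subset E$, $D'\subset D$ (Proposition~\ref{p:4}) together with the uniqueness of the $E.\widetilde{D}$--decomposition from Proposition~\ref{p:00101}(i), which is exactly what the paper invokes via ``$D$ is free with $E$, so is $D'$.'' Your version is in fact slightly more careful, since you explicitly justify the easy inclusions $F\subset H$ and $D'\subset H$ (via $x=x\cdot e$ and $d=e\cdot d$), which the paper's proof glosses over.
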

\medskip

\begin{proof} We have $G=E.D$ and $H=F.D'$.\ \\ (i) By
Proposition~\ref{p:4}, $F\subset E$ so $F\subset E\cap H$. For the
converse, let $a\in E\cap H$. Then $a=xd$ with $x\in F$ and $d\in
D'$. By Proposition~\ref{p:4}, $D'\subset D$. Since $D$ is free
with $E$ so is $D'$. Since $a\in E$ then $d=e$ and so $a=x\in F$.\
\\ (ii) By
Proposition~\ref{p:4}, $D'\subset D$ so $D'\subset D\cap H$. For
the converse, let $a\in D'\cap H$. Then $a=xd$ with $x\in F$ and
$d\in D'$. By Proposition~\ref{p:4}, $F\subset E$. Since $D$ is
free with $E$ so is $D'$. Since $a\in D'$ then $x=e$ and so
$a=d\in D'$.
\end{proof}
\medskip

\begin{cor}\label{C:2} Any partial subgroup of a partial group $G$ is
a partial group with the restriction of the partial inner law of
$G$ to $H$.
\end{cor}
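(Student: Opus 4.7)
The plan is to verify that $H$, equipped with the restriction of the partial inner law of $G$, satisfies the three axioms of Definition~\ref{df:1}, using the decomposition of $H$ supplied by Proposition~\ref{p:4}. This is essentially a matter of bookkeeping, so the goal of the proof is to assemble what has already been established in this section.

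First I would apply Proposition~\ref{p:4} to write $H = F.D'$, where $F$ is a subgroup of $E$ and $D'\subset D$; Corollary~\ref{C:1} then identifies $F=E\cap H$ and $D'=D\cap H$. This immediately yields axiom (i) of Definition~\ref{df:1} for the pair $(F,D')$. For axiom (ii), $e\in D$ and $e\in H$ (since $H$ is a partial subgroup), hence $e\in D\cap H=D'$. One also checks the side condition $D'\subset (H\setminus F)\cup\{e\}$: if $d\in D'\setminus\{e\}$, then $d\in D\setminus\{e\}\subset G\setminus E$, and since $F\subset E$ one has $d\notin F$, so $d\in H\setminus F$.

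The remaining axiom (iii) is where the law needs to be inspected. For $a,b\in H\subset G$ write $a=xd$, $b=yd'$; by Proposition~\ref{p:00101}(i), this decomposition is unique within $G$ with $x,y\in E$ and $d,d'\in D$, and Corollary~\ref{C:1} forces $x,y\in F$ and $d,d'\in D'$. The restriction of the law of $G$ therefore gives $a.b=xy$ expressed in the data of $H$, and $xy\in F$ because $F$ is a subgroup of $E$, so the operation is closed on $H$. Hence the restricted law coincides with the partial law of $H$ defined through its own support/defect decomposition.

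The only subtle point is that the support of $H$ must be taken to be $F$ rather than $E$, so axiom (iii) has to be checked with $x,y$ ranging over $F$; this is what uses the uniqueness of the decomposition together with $F=E\cap H$. No deeper obstacle arises, and this is why the paper's proof can afford to say that the conclusion follows directly from the definitions.
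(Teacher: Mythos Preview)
Your proposal is correct and follows exactly the approach implicit in the paper: the paper's own proof environment for this corollary is empty, signalling that the result is meant to follow directly from Proposition~\ref{p:4} and Corollary~\ref{C:1} together with Definition~\ref{df:1}, which is precisely the verification you carry out. Your explicit check of the side condition $D'\subset (H\setminus F)\cup\{e\}$ and of the compatibility of the restricted law with the $(F,D')$-decomposition are appropriate details that the paper simply omits.
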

\medskip

\begin{proof}
\end{proof}
\medskip

\begin{defn} Let $G$ be a partial group with support $E$ and defect $D$ and $H$ be a partial subgroup of $G$ with support $F$ and defect $D'$.
\ \\ (i) We say that $H$ has a total  support if $E=F$\ \\ (ii) We
say that $H$ has a total  defect if $D'=D$
\end{defn}

\medskip

\begin{prop}\label{p:5} Let $G$ with support $E$ and defect $D$ and $H$ be a
partial subgroup of $G$ with support $F$ and defect $D'$. Then:\
\\ (i) $H$ has a  total defect if and only if $Inv(h)\subset H$
for every $h\in H$\ \\ (ii) $H$ has a  total support if and only
if for every $x\in E$ there is $d\in D'$ such that $xd\in H$.
\end{prop}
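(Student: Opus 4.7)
My plan is to prove both equivalences by using the structural results already established, namely Proposition~\ref{p:4} giving $H=F.D'$ with $F$ a subgroup of $E$ and $D'\subset D$, Corollary~\ref{C:1} giving the identifications $F=E\cap H$ and $D'=D\cap H$, and the uniqueness of the decomposition $a=xd$ from Proposition~\ref{p:00101}(i).

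For part (i), the forward implication is a direct verification: assuming $D'=D$, pick any $h=xd\in H$ with $x\in F$ and $d\in D'=D$; since $Inv(h)=\{x^{-1}d'' : d''\in D\}$ and $F$ is a subgroup of $E$, we have $x^{-1}\in F$ and each $d''\in D=D'$, so every element of $Inv(h)$ lies in $F.D'=H$. For the converse, I would exploit the single test element $h=e$: since $e\in H$, the hypothesis gives $Inv(e)\subset H$, and a quick computation shows $Inv(e)=\{e^{-1}d : d\in D\}=D$. Hence $D\subset H$, which combined with $D'=D\cap H$ from Corollary~\ref{C:1} forces $D\subset D'$, and the reverse inclusion $D'\subset D$ is already known.

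For part (ii), the forward direction is immediate once I recall that $e\in D'$ (because $e\in H$ and $e\in D$, so $e\in D\cap H=D'$): if $E=F$, then for any $x\in E=F$ choosing $d=e\in D'$ gives $xd=x\in F.D'=H$. For the converse, I would use the uniqueness of decomposition: if $xd\in H=F.D'$ for some $d\in D'$, then $xd$ can also be written as $x'd''$ with $x'\in F\subset E$ and $d''\in D'\subset D$, and by Proposition~\ref{p:00101}(i) applied in $E.D$ we get $x=x'\in F$. Thus every $x\in E$ lies in $F$, yielding $E\subset F$; combined with $F\subset E$ from Corollary~\ref{C:1} this gives $E=F$.

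I do not anticipate a genuine obstacle here: the whole argument reduces to bookkeeping with the decomposition $a=xd$ and a single clever use of $h=e$ in part (i) to recover the full defect $D$. The only place where one must be slightly careful is in the converse of (ii), where one has to invoke the uniqueness of the decomposition in the ambient $E.D$ (rather than in $F.D'$) to conclude that the $E$-component of the given $xd$ actually sits in $F$.
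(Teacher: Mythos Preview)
Your proof is correct and follows essentially the same route as the paper. The only minor variation is in the converse of (ii): the paper simply notes that $a.e=x$ and uses closure of $H$ under the partial law (together with $F=E\cap H$) to get $x\in F$, whereas you reach the same conclusion via the uniqueness of the decomposition from Proposition~\ref{p:00101}(i); both arguments are equivalent in spirit and difficulty.
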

\medskip

\begin{proof} (i) If $H$ has a total defect then $D'=D$, so for every $h=xd\in H$
with $x\in F$ and $d\in D$, we have $Inv(h)=\{x^{-1}d',\ \ d'\in D
\}\subset H$. Conversely, since $Inv(e)=\{e^{-1}d,\ \ d\in D \}=D$
then $ D\subset H$, so by Corollary ~\ref{C:1}, $D'= D\cap H=D$.
\
\\ (ii) The directly sense is obvious by definition. For the
converse, let $x\in E$ then there exists $d\in D'$ such that
$a:=xd\in H$, so $a.e=x\in F$.
\end{proof}
\bigskip

\section{{\bf Partial Cosets}}

\medskip

Given any partial subgroup $H$ of a partial group $G$, we can
construct a partition of $G$ into \emph{partial cosets} of $H$,
just as we did for rings. But for partial groups, things are a bit
more complicated. Because the partial group operation may not be
commutative, we have to define two different sorts of partial
cosets. Write $H=F.D'$ and define a relation $\sim_{r}$ on $G$ by
the rule $a \sim_{r} b$ if and only if $b.a^{*}\in F$ for some
$a^{*}\in Inv(a)$. We claim that $\sim_{r}$ is an equivalence
relation:
\
\\ - reflexive: For any $a \in G$, we have $a.a^{*} =e\in F$, so
$a \sim_{r} a$.\ \\ - symmetric: Suppose that $a \sim_{r} b$, with
$a=xd$ and $b=yd'$, so that $h = b.a^{*}=yx^{-1}\in F$. Then
$h^{-1} =xy^{-1} = a.b^{*} \in F$, so $b \sim_{r} a$. \ \\ -
transitive: Suppose that $a\sim_{r} b$ and $b \sim_{r} c$, with
$a=xd$, $b=yd'$ and $c=zd''$, so that $h=b.a^{*}=yx^{-1}\in F$ and
$k =c.b^{*}=zy^{-1}\in F$. Then $kh = (zy^{-1})(yx^{-1}) =
zx^{-1}=c.a^{*}\in F$, so $a \sim_{r} c$. \ \\ \\

The equivalence classes of this equivalence relation are called
the \emph{right partial cosets} of $H$ in $G$. A right partial
coset is a set of elements of the form $Ha = \{h.a :\ \  h \in
H\}$, for some fixed element $a\in G$ called the partial coset
representative. For

$$b \in Ha,\ \ \ \Longleftrightarrow  \ \  \ b.e = h.a\  for\ some
\ \ \ h \in H \ \ \ \Longleftrightarrow \ \ b.a^{*}\in F\ \ \ \
\Longleftrightarrow \ \ \ \ a \sim_{r}b.$$
\medskip

We summaries all this as follows:

\medskip

\begin{prop} If $H$ is a partial subgroup of the partial group $G=E.D$, then $E$ is
partitioned into right partial cosets of $H$ in $G$, sets of the
form $Ha = \{h.a : \ \ \ h \in H\}$.
\end{prop}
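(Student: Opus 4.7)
The plan is to reduce the statement to the elementary principle that any equivalence relation on a set partitions that set, applied to the relation $\sim_{r}$ whose three axioms have already been verified in the paragraph preceding the statement. Since $\sim_{r}$ is reflexive, symmetric, and transitive, the equivalence classes $[a]_{\sim_{r}} := \{b \in G : a \sim_{r} b\}$ automatically form a partition; what remains is to identify each such class with the right partial coset $Ha = \{h.a : h \in H\}$.

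For this identification I would fix $a = xd \in G$ and arbitrary $b = yd' \in G$ (with $x,y \in E$ and $d,d' \in D$) and expand the chain of equivalences already indicated just before the statement. Using Proposition~\ref{p:4} to write $H = F.D'$ with $F = E \cap H \subset E$, the relation $a \sim_{r} b$ says $b.a^{*} = yx^{-1} \in F$ for some $a^{*} \in Inv(a)$. Setting $h := yx^{-1} \in F \subset H$, Proposition~\ref{p:1} gives $h.a = yx^{-1}x = y = b.e$, so $b$ belongs to $Ha$ in the sense used in the preceding discussion. Conversely, if $h.a = b.e$ for some $h = fd'' \in H$ with $f \in F$, then $y = fx$, whence $b.a^{*} = yx^{-1} = f \in F$, i.e.\ $a \sim_{r} b$. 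Combined with the standard partition-by-equivalence-classes principle, this yields the claim.

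The only real obstacle is a bookkeeping one about the partial nature of the law: by Proposition~\ref{p:1}(i) the literal set $Ha$ lies inside $E$, whereas the equivalence class $[a]_{\sim_{r}}$ also meets $G \setminus E$ (since $a \sim_{r} a.e$ via $a.(a.e)^{*} = xx^{-1} = e \in F$). One must therefore read the membership "$b \in Ha$" through the projection $b \mapsto b.e$ onto the support, exactly as in the displayed chain of equivalences preceding the proposition; this is also why the statement phrases the partition in terms of $E$. Once this reading is adopted, the right partial cosets coincide with the equivalence classes of $\sim_{r}$, and the partition assertion follows with no further computation.
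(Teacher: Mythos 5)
Your proposal is correct and follows essentially the same route as the paper, which proves the proposition implicitly through the preceding verification that $\sim_{r}$ is an equivalence relation together with the displayed chain of equivalences identifying its classes with the cosets $Ha=Fx$. Your closing remark --- that $Ha\subset E$ while the $\sim_{r}$-class of $a$ is all of $(Fx).D\subset G$, so membership must be read through the projection $b\mapsto b.e$ --- is a welcome clarification of a point the paper's display ``$b\in Ha \Leftrightarrow b.e=h.a$'' glosses over, and it correctly explains why the statement asserts a partition of $E$ rather than of $G$.
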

\medskip

In a similar way, the relation $\sim_{l}$ defined on $G$ by the
rule $a \sim_{l} b$ if and only if $a^{*}.b\in F$ is an
equivalence relation on $G$, and its equivalence classes are the
left partial cosets of $H=F.D'$ in $G$, the sets of the form $$aH
= \{a.h : \ \ h \in H\}.$$

 If $G$ is an abelian partial group, the left and
right partial cosets of any partial subgroup coincide, since $Ha =
\{h.a :\ \  h \in H\} = \{a.h : \ \ h \in H\} = aH$.

\medskip

We say that $G$ is abelian if $a.b=b.a$ for every $a, b\in G$.
\medskip

\section{{\bf Normal partial subgroups}}

 A \emph{normal} partial subgroup is a special kind of partial subgroup
of a partial group. Recall from the last chapter that any partial
subgroup $H=F.D'$ has right and left cosets, which may not be the
same. We say that $H$ is a \emph{normal subgroup} of $G$ if the
right and left cosets of $H$ in $G$ are the same; that is, if $Ha
= aH$ for any $a \in G$. There are several equivalent ways of
saying the same thing. We define $$a^{*}Ha = \{a^{*}.h.x :\ \ \ \
h \in H\}$$ for any element $a\in G$ and $a^{*}\in Inv(a)$. We can
show that $a^{*}Ha=x^{-1}Fx$ and $a=xd$.
\medskip

\begin{prop}\label{p:121001} Let $H=F.D'$ be a partial subgroup of the partial group $G=E.D$. Then the following are
equivalent: \ \\ (a) $H$ is a normal partial subgroup, that is,
$Ha = aH$ for all $a\in G$.\ \\ (b) $a^{*}Ha = F$ for all $a\in
G$.\ \\ (c) $a^{*}.h.a\in F$, for all $a\in G$ and $h \in H$.\ \\
(d) $x^{-1}Fx=F$ for all $x\in E$.
\end{prop}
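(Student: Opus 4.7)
The plan is to reduce the whole equivalence to the classical characterization of normality for $F$ as a subgroup of $E$, by translating each of the four conditions into a statement purely inside $E$.

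First I would fix $a=xd\in G$ with $x\in E$, $d\in D$, and an arbitrary element $h=yd'\in H$ with $y\in F$, $d'\in D'$. Using the defining formula for the partial law, $(yd').(xd)=yx$ and $(xd).(yd')=xy$, so that $Ha=Fx$ and $aH=xF$ as subsets of $E$. Similarly, for any $a^{*}=x^{-1}d''\in Inv(a)$, the partial law gives $a^{*}.h=x^{-1}y$ and then $(x^{-1}y).a=x^{-1}yx$; this confirms the identification $a^{*}Ha=x^{-1}Fx$ already noted in the text, and shows along the way that $a^{*}.h.a$ is independent of the choice of $a^{*}\in Inv(a)$. Since $a=xd$ runs over $G=E.D$ exactly when $x$ runs over $E$, and $h=yd'$ runs over $H=F.D'$ exactly when $y$ runs over $F$, any universal statement over $a\in G$ (or $h\in H$) becomes a universal statement over $x\in E$ (or $y\in F$).

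Given these translations, the four conditions read: (a) $Fx=xF$ for every $x\in E$; (b) $x^{-1}Fx=F$ for every $x\in E$; (c) $x^{-1}yx\in F$ for every $x\in E$ and every $y\in F$; (d) $x^{-1}Fx=F$ for every $x\in E$. Conditions (b) and (d) are literally identical. The equivalence (a) $\Leftrightarrow$ (d) follows by left-multiplying $Fx=xF$ by $x^{-1}$ inside the group $E$. For (c) $\Leftrightarrow$ (d), the forward direction says $x^{-1}Fx\subseteq F$ for every $x\in E$, and substituting $x^{-1}$ for $x$ yields $xFx^{-1}\subseteq F$, hence the reverse inclusion $F\subseteq x^{-1}Fx$.

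The main obstacle is really only bookkeeping: carrying out the three identifications $Ha=Fx$, $aH=xF$, $a^{*}Ha=x^{-1}Fx$ carefully using the case distinctions built into the partial law. Once this is done, the proposition is just the classical normal-subgroup criterion applied to $F\leq E$. In the write-up I would devote most of the space to verifying these three set equalities and close with a short paragraph disposing of the cycle of implications.
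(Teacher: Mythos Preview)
Your proposal is correct. The reductions $Ha=Fx$, $aH=xF$, and $a^{*}Ha=x^{-1}Fx$ are computed correctly from the partial law, and once these are in place your cycle of equivalences is just the standard normality criterion in $E$.

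The paper takes a slightly different route. Rather than translating all four conditions into statements about $F\leq E$ first, it argues directly at the partial-group level: $(a)\Leftrightarrow(b)$ by hitting $Ha=aH$ on the left with $a^{*}$ (using $a^{*}.aH=eH=F$), then $(b)\Rightarrow(c)$ trivially, and $(c)\Rightarrow(b)$ by the usual trick of replacing $a$ by $a^{*}$ to get the reverse inclusion. Condition $(d)$ is not handled inside the proof at all; it is covered only by the remark, made in the paragraph preceding the proposition, that $a^{*}Ha=x^{-1}Fx$, which makes $(b)$ and $(d)$ literally the same set equality. Your approach makes this identification explicit and treats all four conditions uniformly; it is essentially the content of what the paper later proves as Proposition~\ref{Lad:011}(ii)--(iii), brought forward. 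Either way the argument is short, but yours is the more transparent organization.
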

\medskip

\begin{proof} If $Ha = aH$, then $a^{*}Ha = a^{*}.aH =eH=F$, and conversely. So (a) and (b) are
equivalent. If (b) holds then every element $a^{*}.h.a$ belongs to
$a^{*}Ha$, and so to $H$, so (c) holds. Conversely, suppose that
(c) holds. Then every element of $a^{*}Ha$ belongs to $H$, and we
have to prove the reverse inclusion. So take $h\in H$. Putting $b
= a^{*}$, we have $c = b^{*}.h.b = a.h.a^{*}\in F$, so $h.e\in
a^{*}Ha$, finishing the proof. Now the important thing about
normal patial subgroups is that, like ideals, they are kernels of
homomorphisms.
\end{proof}
\medskip

\begin{prop}\label{p:144101}  If $G$ is Abelian, then every partial subgroup $H$ of
$G$ is normal.
\end{prop}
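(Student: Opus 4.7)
The plan is to reduce the statement to condition (d) of Proposition~\ref{p:121001}, which characterizes normality of $H=F.D'$ by $x^{-1}Fx=F$ for every $x\in E$. Thus I only need to extract from the hypothesis "$G$ is abelian" the fact that the group $E$ is abelian, because then conjugation in $E$ is trivial and condition (d) is automatic for any subgroup $F$ of $E$.

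First I would recall, via Proposition~\ref{p:4} and Corollary~\ref{C:1}, that a partial subgroup $H$ of $G=E.D$ necessarily has the form $H=F.D'$ with $F=E\cap H$ a (classical) subgroup of $E$ and $D'=D\cap H\subseteq D$. This decomposition is what makes Proposition~\ref{p:121001} applicable.

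Next I would show that abelianness of the partial law on $G$ descends to abelianness of the group law on $E$. By definition of the partial law, for $x,y\in E$ (writing $x=x\cdot e$, $y=y\cdot e$ with $e\in D$), one has $x.y=xy$, i.e.\ the partial law restricted to $E\times E$ coincides with the group multiplication of $E$. Hence the assumption $a.b=b.a$ for all $a,b\in G$, specialized to elements of $E$, yields $xy=yx$ in $E$, so $E$ is abelian.

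With $E$ abelian, for every subgroup $F\le E$ and every $x\in E$ we have $x^{-1}Fx=F$, which is precisely condition (d) of Proposition~\ref{p:121001}. Therefore $H$ is a normal partial subgroup of $G$. There is no real obstacle here: the only substantive point is the observation that the partial operation restricts to the genuine group operation on the support $E$, so commutativity of $(G,.)$ forces $E$ to be an abelian group; everything else is a direct application of the already-established equivalent characterization of normality.
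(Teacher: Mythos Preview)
Your argument is correct, but it takes a longer route than the paper. The paper simply observes that the very definition of ``abelian'' for a partial group, namely $a.b=b.a$ for all $a,b\in G$, immediately gives $aH=\{a.h:h\in H\}=\{h.a:h\in H\}=Ha$ for every $a\in G$, which is condition (a) of Proposition~\ref{p:121001} (and in fact the definition of normality) directly. Your approach instead passes through condition (d): you first restrict commutativity to $E$ to deduce that the support group is abelian, and then note that in an abelian group every subgroup is invariant under conjugation. Both are valid; the paper's argument is shorter because it stays at the level of cosets and never needs to unpack the decomposition $H=F.D'$ or invoke the equivalence of (a) and (d), whereas your route, while perfectly sound, introduces the extra step of descending to the support and then climbing back up via Proposition~\ref{p:121001}.
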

\medskip

\begin{proof} The proof is obvious because, if $G$ is Abelian, then $aH = Ha$ for all $a\in G$.
\end{proof}
\medskip

For the record, here is a normal subgroup test:

\begin{prop}\label{p:1dd101} (Normal partial subgroup test) A non-empty subset $H=F.D'$ of a partial group $G=E.D$ is a
normal partial subgroup of $G$ if the following hold: \ \\ (a) for
any $h,k \in H$, we have $h.k^{*} \in F$.\ \\ (b) for any $h\in H$
and $a\in G$, we have $a^{*}.h.a\in F$.
\end{prop}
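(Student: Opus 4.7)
The plan is to split the proof into two stages: first verify that $H$ fulfils the three axioms of Definition~4.1 so that it is a partial subgroup, and then upgrade this to normality by invoking Proposition~\ref{p:121001}. The second stage is essentially free, since hypothesis~(b) is literally the criterion~(c) of Proposition~\ref{p:121001}; so essentially all the work goes into extracting the partial-subgroup structure from hypothesis~(a) combined with the decomposition $H = F.D'$.

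The first substantive step is to use~(a) to show that $F$ is a subgroup of $E$. Writing two arbitrary elements of $H$ as $h = xd$ and $k = yd'$ with $x, y \in F$ and $d, d' \in D'$, Proposition~\ref{p:2} gives $k^{*} = y^{-1}d''$ for some $d'' \in D$, and the definition of the partial law collapses $h.k^{*}$ to $xy^{-1}$ regardless of the choice of $d''$. So hypothesis~(a) says exactly that $xy^{-1} \in F$ for all $x, y$ arising as $E$-components of elements of $H$; combined with the non-emptiness of $F$ inherited from $H$, this is the classical two-element subgroup test, and in particular $e \in F$.

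Next I verify axioms (i)--(iii) of Definition~4.1. For~(i), combining $e \in F$ with $e \in D'$ (which is the natural convention once we identify $D'$ with $D \cap H$, non-empty because $H$ is non-empty) gives $e = e.e \in F.D' = H$. For closure~(ii), if $h = xd$ and $k = yd'$ lie in $H$, then the partial law yields $h.k = xy \in F$, and rewriting $xy = (xy).e$ places it back in $F.D' = H$. For~(iii), Proposition~\ref{p:2} describes $Inv(h) = \{x^{-1}d'' : d'' \in D\}$; since $x^{-1} \in F$ and $e \in D'$, the element $x^{-1} = x^{-1}.e$ belongs simultaneously to $Inv(h)$ and to $H$, so the intersection is non-empty.

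With $H$ established as a partial subgroup, hypothesis~(b) matches Proposition~\ref{p:121001}(c) verbatim, and that proposition delivers normality, $Ha = aH$ for all $a \in G$. The most delicate point to pin down is the status of the defect set $D'$---in particular, ensuring that $e \in D'$ so that the partial-subgroup axioms~(i) and~(iii) actually have witnesses inside $H$; once this convention is fixed, the rest follows mechanically from the partial-law identities of Propositions~\ref{p:1} and~\ref{p:2}.
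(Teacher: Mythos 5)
Your proposal is correct and follows essentially the same route as the paper: hypothesis (a) yields the partial-subgroup structure via the two-element subgroup test on $F$, and hypothesis (b) is exactly condition (c) of Proposition~\ref{p:121001}, which delivers normality. The paper's own proof is a one-line citation of an unstated ``second partial subgroup test,'' so your version simply supplies the verification (including the point about $e\in D'$) that the paper leaves implicit.
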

\medskip

 \begin{proof} (a) is the condition
of the second partial subgroup Test, and we saw that (b) is a
condition for a partial subgroup to be normal.
\end{proof}
\medskip

\section{{\bf Quotient of partial group}}
\medskip

Let $H=F.D'$ be a normal partial subgroup of a partial group
$G=E.D$. We define the quotient group $G_{/H}$ as follows: \ \\
$\diamond$  The elements of $G_{/H}$ are the cosets of $H$ in $G$
(left or right doesn't matter, since $H$ is normal).\ \\
$\diamond$ The group operation is defined by $(Ha)(Hb) = Hab$ for
all $a,b\in G$; in other words, to multiply cosets, we multiply
their representatives.

\medskip

\begin{defn} If $G$ is a partial group and $H \subset G$ is a partial subgroup, define a relation on
$G$ by $$a\ \sim_{H}\ b \ \ \ \Longleftrightarrow \\ \ \
\mathrm{there \ exists}\ \ h\in H\ \ \mathrm{such\ that} \ \
a.e=b.h$$
 We say that $a$ is congruent to $b$ mod $H$.
\end{defn}
\medskip

Now, write $H=F.D'$ then  saying that $a.e=b.h$ for some $h \in H$
is the same as saying that there exists $a^{*}\in Inv(a)$ such
that $a^{*}.b\in F$. Thus:
\medskip

\begin{lem} Congruence mod $H$ is an equivalence relation.
\end{lem}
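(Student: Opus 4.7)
The plan is to verify the three defining properties of an equivalence relation directly, exploiting the reformulation noted in the excerpt just before the lemma: $a \sim_H b$ if and only if there exists $a^{*} \in \mathrm{Inv}(a)$ with $a^{*}.b \in F$. This reformulation converts the condition, which involves the defect $D$ via the terms $a.e$ and $b.h$, into a condition purely about the ``support part'' $F$, on which I can use that $F$ is a subgroup of $E$. I will write $a = xd$, $b = yd'$, $c = zd''$ with $x,y,z \in E$ and $d,d',d'' \in D$, and use Proposition~\ref{p:2}(i)--(ii) to identify $a^{*}.b$ with $x^{-1}y$, so the condition $a \sim_H b$ becomes $x^{-1}y \in F$.

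For reflexivity, I pick any $a^{*} \in \mathrm{Inv}(a)$ and note that $a^{*}.a = x^{-1}x = e \in F$, so $a \sim_H a$. For symmetry, assume $a \sim_H b$, so $x^{-1}y \in F$; since $F$ is a subgroup of $E$, its inverse $y^{-1}x \in F$. Choosing any $b^{*} \in \mathrm{Inv}(b)$ gives $b^{*}.a = y^{-1}x \in F$, hence $b \sim_H a$. For transitivity, suppose $a \sim_H b$ and $b \sim_H c$, i.e.\ $x^{-1}y \in F$ and $y^{-1}z \in F$. Closure of $F$ under the law of $E$ yields $(x^{-1}y)(y^{-1}z) = x^{-1}z \in F$, and picking any $a^{*} \in \mathrm{Inv}(a)$ gives $a^{*}.c = x^{-1}z \in F$, so $a \sim_H c$.

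The one place that could look like an obstacle is making sure the reformulation ``$a.e = b.h$ for some $h \in H$'' really is equivalent to ``$a^{*}.b \in F$ for some $a^{*} \in \mathrm{Inv}(a)$,'' since the text asserts this without proof. The check is short: if $a.e = b.h$ with $h = fd_1 \in F.D'$, then $x = yf$, so $x^{-1}y = f^{-1} \in F$; conversely, if $x^{-1}y = f \in F$, then $y = xf^{-1}$, and taking $h = f^{-1} \in F \subset H$ (valid because $e \in D'$ so $F \subset F.D' = H$) gives $b.h = yf = x = a.e$. After that verification the three axioms are routine subgroup-of-$E$ manipulations, which is why I do not expect any further difficulty.
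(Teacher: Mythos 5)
Your proof is correct and follows essentially the same route as the paper: the paper reformulates $a\ \sim_{H}\ b$ as ``$a^{*}.b\in F$'' and then relies on the verification, already carried out in the section on partial cosets for the relations $\sim_{l}$ and $\sim_{r}$, that this condition defines an equivalence relation via exactly the subgroup-of-$E$ computations you perform (and you additionally check the reformulation itself, which the paper only asserts). The one blemish is a harmless bookkeeping slip in that check --- from $x^{-1}y=f$ one gets $y=xf$, and the witness is $h=f^{-1}=y^{-1}x$ --- which does not affect the conclusion since $F$ is closed under inversion.
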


 What do the equivalence classes look like?
\medskip

\begin{defn} If $H$ is apartial subgroup of the partial group $G$ and $g\in G$,
 then define the left coset of $H$ containing $g$ as $gH =\{g.h \ |\  h \in H
\}$.
 \end{defn}
\medskip

\begin{prop}(Congruence mod H and Cosets). Let $H=F.D'$ be a partial subgroup of the partial group $G=E.D$. Then\ \\ (a) The following
are equivalent:
\begin{itemize}
  \item [(i)] $a\ \sim_{H}\ b$
  \item [(ii)] $a^{*}b \in F$ for every $a^{*}\in Inv(a)$
  \item [(iii)] $b.e = a.h$ for some $h\in H$
  \item[(iv)] $b.e \in aH$
  \item[(v)] $bH \subset aH$
  \item[(vi)] $bH = aH$
\end{itemize}\ \\ (b) The left cosets $aH$ are the equivalence classes of
equivalence mod $H$. Thus, two left cosets are either equal or
disjoint (this being true of equivalence classes in general).
\end{prop}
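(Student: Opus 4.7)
The proof reduces naturally to a computation at the level of the underlying group $E$. I would begin by fixing notation: write $a = xd$ and $b = yd'$ with $x,y \in E$ and $d,d' \in D$, and, by the structure theorem for partial subgroups (Proposition~\ref{p:4}), write $H = F.D'$ with $F$ a subgroup of $E$ and $D' \subset D$. Two basic computations drive everything. First, for any $h = fd'' \in H$ with $f \in F$ and $d'' \in D'$, the definition of the partial law gives $a.h = xf$ and $b.h = yf$; consequently $aH = xF$ and $bH = yF$, both subsets of $E$. Second, $a.e = x$, $b.e = y$, and $a^{*}.b = x^{-1}y$ for any $a^{*}\in Inv(a)$, by Proposition~\ref{p:2}(i)--(ii).

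With these formulas in hand, each of (i)--(vi) translates to a condition on $x$, $y$, and $F$ inside the group $E$. The definition of $\sim_H$ gives (i) $\Leftrightarrow$ ``$x = yf$ for some $f \in F$'', i.e. $y^{-1}x \in F$, equivalently $x^{-1}y \in F$. Condition (ii) is $x^{-1}y \in F$ directly; (iii) reads $y = xf$ for some $f \in F$, hence again $x^{-1}y \in F$; (iv) reads $y \in xF$; and (v) and (vi) read $yF \subset xF$ and $yF = xF$ respectively. Thus (i)--(iv) all reduce literally to $x^{-1}y \in F$, and their equivalence with (v) and (vi) is exactly the classical coset lemma applied to the subgroup $F$ of the group $E$. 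The cyclic chain of implications is then immediate.

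For part (b), I would use the equivalence (i) $\Leftrightarrow$ (vi) to conclude that $a \sim_H b$ holds iff $aH = bH$ as subsets of $E$. Hence distinct cosets correspond to distinct equivalence classes of $\sim_H$, and the standard partition property of equivalence classes translates directly to the assertion that two left cosets in $E$ are either equal or disjoint.

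The only obstacle I anticipate is careful bookkeeping of where each object lives: by Proposition~\ref{p:1}(i) the partial product always lands in $E$, so $aH \subset E$, while the congruence relation lives on all of $G$ and its equivalence classes are subsets of $G$. The formulas $a.e = x$ and $a.h = xf$ do the necessary work of collapsing the defect component $D$ whenever one needs to connect an element of $G$ to a condition inside $E$, so the argument itself remains routine once the translation is set up.
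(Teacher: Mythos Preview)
Your argument is correct and considerably more complete than the paper's own proof, which for part (a) says only ``by definition'' without carrying out any of the translations you perform. Your explicit reduction $aH = xF$ and $bH = yF$ to ordinary cosets of $F$ in the support group $E$ is precisely the right mechanism; the paper records this identity separately (and later) as Proposition~\ref{Lad:011}(ii), so you are in effect anticipating that lemma and using it where it is needed. For part (b) the paper invokes (i)$\Leftrightarrow$(iv) to write ``$[a]=aH$'', whereas you use (i)$\Leftrightarrow$(vi); your closing caveat is well placed, since the paper's literal equality $[a]=aH$ cannot hold (the left side is a subset of $G$, the right of $E$), and what both arguments actually establish is the bijection $[a]\leftrightarrow aH$ between $\sim_H$-classes and left cosets.
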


\medskip

\begin{proof}  By definition, we prove (a). For
part (b), denote the equivalence class of $a \in G$ by $[a]$. One
has \begin{align*} b \in [a]\ & \Longleftrightarrow  \ a\
\sim_{H}\ b \ \ \ \ (\mathrm{Definition\  of \ equivalence\
classes})\\ \ & \Longleftrightarrow \ b.e \in aH \ &  \
\end{align*}
 By part (a), (i)$\Longrightarrow(iv)$ whence $[a] = aH$. That is, the equivalence classes are just
the left cosets, as required.
\end{proof}
\medskip

\begin{prop}\label{p:12121xvv} If $N$ is a normal partial subgroup of $G$, then the quotient group
$G_{/N}$ as defined above is a group. Moreover, the map $\pi$ from
$G$ to $G_{/N}$ defined by $\pi(a) = Na$ is a homomorphism whose
kernel is $N$ and whose image is $G_{/N}$.
\end{prop}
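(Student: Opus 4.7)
The plan is to reduce everything to the underlying ordinary group quotient $E/F$. By Proposition~\ref{p:121001}(d), normality of $N = F.D'$ in $G = E.D$ is exactly the statement that $F$ is a normal subgroup of the group $E$, and by Proposition~\ref{p:aa01} the ``visible'' group inside $G$ is $E$ itself. So I expect $G_{/N}$ to be naturally identified with $E/F$, and every group-axiom verification should collapse to the corresponding statement for this ordinary quotient.

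The essential step --- and the main obstacle --- is well-definedness of the coset product $(Na)(Nb) := N(a.b)$. Writing $a = x_a d_a$, $b = x_b d_b$ and alternative representatives $a' = x_{a'} d_{a'}$, $b' = x_{b'} d_{b'}$, the preceding proposition translates $Na = Na'$ and $Nb = Nb'$ into $x_{a'}^{-1} x_a \in F$ and $x_{b'}^{-1} x_b \in F$. The identity
\[
(x_{a'} x_{b'})^{-1}(x_a x_b) \;=\; x_{b'}^{-1}\bigl(x_{a'}^{-1} x_a\bigr) x_{b'} \cdot \bigl(x_{b'}^{-1} x_b\bigr)
\]
then lies in $F$: the first factor by normality of $F$ in $E$, the second by assumption. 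Since $a.b = x_a x_b$ and $a'.b' = x_{a'} x_{b'}$ by the definition of the partial law, this yields $N(a.b) = N(a'.b')$.

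Once this is in hand, the remaining verifications are bookkeeping. Associativity in $G_{/N}$ is inherited from Proposition~\ref{p:1}(iii); the class $Ne$ is the identity because $(Ne)(Na) = N(e.a) = Nx_a = Na$, where the last equality uses $a \sim_N x_a$ (immediate from $a.e = x_a.e = x_a$ together with $e \in N$); and an inverse of $Na$ is $Na^*$ for any $a^* \in Inv(a)$, which exists by Proposition~\ref{p:2}(ii) and satisfies $a.a^* = e$ by Proposition~\ref{p:2}(i). Finally, $\pi(a) = Na$ is a homomorphism in the paper's sense, since $\pi(a.b).e = N(a.b) = (Na)(Nb) = \pi(a).\pi(b)$; it is surjective by construction, and its kernel $\{a \in G : Na = Ne\} = \{a = x_a d_a : x_a \in F\}$ is exactly $N$ under the identification with $E/F$ described above.
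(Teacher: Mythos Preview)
Your argument is correct and takes a genuinely different route from the paper. The paper's proof addresses only well-definedness and does so \emph{intrinsically} in the partial-coset language: from $a'.e = h.a$ and $b'.e = k.b$ with $h,k\in N$ it writes $a'.b' = h.a.k.b$, then invokes $aN = Na$ to swap $a.k$ for some $l.a$ with $l\in N$, obtaining $a'.b' = (h.l).(a.b)\in N(a.b)$. You instead immediately project to the support via Proposition~\ref{p:121001}(d), identifying the question with well-definedness of multiplication in the ordinary quotient $E/F$, and then verify the remaining axioms and the homomorphism property explicitly --- none of which the paper's proof actually carries out. Your reduction is cleaner and more complete; the paper's coset-swapping argument has the modest virtue of staying inside the partial-group formalism without naming $E$ and $F$.

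One point deserves a sharper statement. Your final clause asserts that $\ker(\pi)=\{a=x_a d_a : x_a\in F\}$ ``is exactly $N$''. In fact this set is $F.D$, which contains $N=F.D'$ but is strictly larger whenever $D'\subsetneq D$. The phrase ``under the identification with $E/F$'' acknowledges the discrepancy without resolving it: what is true is that $\ker(\pi)$ and $N$ have the same support $F$, hence determine the same cosets, but as subsets of $G$ they need not coincide. This is really an imprecision in the proposition as stated (and the paper's own proof does not touch the kernel claim at all), so it is not a defect of your method; but you should say explicitly that $\ker(\pi)=F.D$ and that this equals $N$ only when $N$ has total defect.
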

\medskip

 \begin{proof} First we have to show that the
definition of the group operation is a good one. In other words,
suppose that we chose different coset representatives $a'$ and
$b'$ for the cosets $Ha$ and $Nb$; is it true that $Na.b =
Ha'.b'$? We have $x' = h.x$ and $y' = k.y$, for some $h,k \in N$.
Now $a.k$ belongs to the left coset $aN$. Since $N$ is normal,
this is equal to the right coset $Ha$, so that $a.k = l.a$ for
some $l\in N$. Then $a'b' = h.a.k.b = (h.l).(a.b)\in Na.b$, since
$h.l \in N$. Thus the operation is indeed well defined.
\end{proof}
\medskip

\begin{lem}\label{L:111414} (The Quotient Group). If $N$ is a normal partial subgroup of the partial group $G$,
 then the multiplication of left cosets turns $G_{/N}$ into a group, called the quotient partial group.
\end{lem}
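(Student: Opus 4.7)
The plan is brief: Lemma~\ref{L:111414} is essentially the first assertion of Proposition~\ref{p:12121xvv}, so the proof is largely a reorganization plus an explicit verification of the group axioms for $G_{/N}$ with multiplication $(aN).(bN):=(a.b)N$. For well-definedness, I would reuse the argument of Proposition~\ref{p:12121xvv}: if $a'=h.a$ and $b'=k.b$ with $h,k\in N$, normality (Proposition~\ref{p:121001}(c)) provides $l\in N$ with $a.k=l.a$, whence $a'.b'=(h.l).(a.b)$ lies in $(a.b)N$, so the product does not depend on the chosen representatives.

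Next, associativity on $G_{/N}$ follows at once from associativity of the partial law (Proposition~\ref{p:1}(iii)):
\[
\bigl((aN).(bN)\bigr).(cN) \;=\; \bigl((a.b).c\bigr)N \;=\; \bigl(a.(b.c)\bigr)N \;=\; (aN).\bigl((bN).(cN)\bigr).
\]
For the identity, I would take $eN$. Writing $a=xd$ with $x\in E$ and $d\in D$, Proposition~\ref{p:1}(ii) gives $a.e=x$, and a direct computation shows $aN=xF=xN$, where $F$ is the support of $N$. Hence $(aN).(eN)=(a.e)N=xN=aN$, and symmetrically $(eN).(aN)=aN$. For inverses, by Proposition~\ref{p:2}(ii) the set $Inv(a)$ is nonempty; choosing any $a^{*}\in Inv(a)$, Proposition~\ref{p:2}(i) yields $a.a^{*}=a^{*}.a=e$, so $(aN).(a^{*}N)=(a^{*}N).(aN)=eN$. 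The coset $a^{*}N$ does not depend on the choice of $a^{*}$, since all inverses share the same $E$-component $x^{-1}$ and $bN$ only depends on the $E$-component of $b$.

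The main obstacle is conceptual rather than computational: because the partial law is not unital in the strict sense ($a.e=x\neq a$ whenever $a\notin E$), one must carefully distinguish between the literal set $aN=\{a.h:\ h\in N\}=xF$ and the congruence class of $a$, and then observe that the two agree as cosets $xF$ inside $E$. Once this identification is in hand, every group axiom of $G_{/N}$ merely transports the corresponding axiom of the support group $E$ through the partial multiplication, and no further calculation is needed.
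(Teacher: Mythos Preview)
Your proposal is correct and follows the same approach as the paper, which simply states ``We just check the axioms.'' Your version is considerably more detailed, carrying out explicitly the well-definedness, associativity, identity, and inverse verifications that the paper leaves to the reader; in particular, your observation that $aN=xF$ depends only on the $E$-component of $a$ (which is Proposition~\ref{Lad:011}(ii)) is exactly the reduction that makes all the axiom checks routine.
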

\medskip

\begin{proof} We just check the axioms.
\end{proof}
\medskip

\section{{\bf Homomorphisms of partial groups}}

An isomorphism between partial groups has two properties: it is a
bijection; and it preserves the partial group operation. If we
relax the first property but keep the second, we obtain a
homomorphism. Let $G_{1}=E_{1}.D_{1}$ and $G_{2}=E_{2}.D_{2}$ two
partial groups. Just as for rings, let $f : G_{1}\longrightarrow
G_{2}$ such that $f(E_{1})\subset E_{2}$ and $f(D_{1})\subset
D_{2}$. We say that a function $f$ is :\ \\ \\ $\diamond$ a
\emph{homomorphism} of partial group if it satisfies $$f(g.h).e =
f(g).f(h)\ \ \ \ \ \ \ (1)$$\
\\  $\diamond$ a \emph{monomorphism} of partial group if it
satisfies (1) and is one-to-one;\ \\ \\ $\diamond$ an
\emph{epimorphism} of partial group if it satisfies (1) and is
onto.\
\\ \\ $\diamond$ an \emph{isomorphism} of partial group if it satisfies (1) and is
one-to-one and onto.
\medskip

We have the following lemma, proved in much the same way as for
rings:

\medskip

\section{{\bf Proof of Theorems ~\ref{t:1}, ~\ref{t:2} and ~\ref{t:3}}}
\medskip

\begin{lem} (Inverse of a Homomorphism). Let $G_{1}=E_{1}D_{1}$ and $G_{2}=E_{2}D_{2}$ be two partial group. If $f : G_{1} \longrightarrow G_{2}$ is an
homomorphism of partial group, bijective with
$f^{-1}(E_{2})\subset E_{2}$ and $f^{-1}(D_{2})\subset D_{1}$,
then $f^{-1} : G_{2} \longrightarrow G_{1}$ is also a partial
group homomorphism.
\end{lem}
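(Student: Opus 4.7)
The plan is to verify the defining identity
\[
f^{-1}(u.v).e \;=\; f^{-1}(u).f^{-1}(v)
\]
for all $u,v \in G_{2}$ by chasing it through the bijection $f$. I fix $u,v \in G_{2}$ and set $a := f^{-1}(u)$ and $b := f^{-1}(v)$, so that $f(a)=u$ and $f(b)=v$; the target identity then reduces to proving $f^{-1}(u.v).e = a.b$.

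The key observation is that by Proposition~\ref{p:1}(i), $a.b$ lies in the support $E_{1}$. Since any partial group homomorphism by definition sends support into support ($f(E_{1})\subset E_{2}$), this forces $f(a.b) \in E_{2}$, and Proposition~\ref{p:1}(v) then gives the simplification $f(a.b).e = f(a.b)$. On the other hand, applying the homomorphism identity of $f$ to the pair $(a,b)$ yields $f(a.b).e = f(a).f(b) = u.v$. Comparing the two, $f(a.b) = u.v$, and applying $f^{-1}$ produces $a.b = f^{-1}(u.v)$. Because $a.b \in E_{1}$, a second use of Proposition~\ref{p:1}(v) yields $f^{-1}(u.v).e = f^{-1}(u.v) = a.b$, which is exactly $f^{-1}(u).f^{-1}(v)$ as required.

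The inclusion hypotheses $f^{-1}(E_{2})\subset E_{1}$ and $f^{-1}(D_{2})\subset D_{1}$ (the first one evidently corrects the typographical $E_{2}$ in the statement) play an auxiliary role: they guarantee that $f^{-1}$ respects the support/defect decomposition, a prerequisite built into the definition of a partial group homomorphism in the preceding section. There is no genuine obstacle in the argument; the whole proof rests on the minor trick that the trailing ``$.e$'' in the homomorphism identity evaporates whenever both sides already live in the support, thanks to Proposition~\ref{p:1}(v). The only thing to keep an eye on is making sure that both the ``outer'' product $u.v$ and the ``inner'' product $a.b$ are treated via the support, which is automatic by Proposition~\ref{p:1}(i).
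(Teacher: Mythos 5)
Your proof is correct and takes essentially the same route as the paper's: apply the homomorphism identity of $f$ to the pair $f^{-1}(u),f^{-1}(v)$, use that their product lies in the support $E_{1}$ (hence its image lies in $E_{2}$) to strip the trailing ``$.e$'', and conclude by bijectivity. The only cosmetic difference is that you remove the ``$.e$'' via Proposition~\ref{p:1}(v), whereas the paper writes $f(f^{-1}(u).f^{-1}(v))=(u.v)d$ with $d\in D_{2}$ and uses freeness to force $d=e$; your observation that the stated hypothesis $f^{-1}(E_{2})\subset E_{2}$ is a typo for $f^{-1}(E_{2})\subset E_{1}$ is also correct.
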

\medskip

\begin{proof} All we need to show is that $f^{-1}(a.b)= f^{-1}(a).f^{-1}(b)$
 for every $a, b \in G_{2}$. But, since $f$ is a homomorphism, we
 have
 $f(f^{-1}(a).f^{-1}(b)).e_{2}=f(f^{-1}(a)).f(f^{-1}(b))=a.b$.
 Then there exists $d\in D_{2}$ such that
 $f(f^{-1}(a).f^{-1}(b))=(a.b)d$. As $f^{-1}(a).f^{-1}(b)\in E_{1}$ then  $f(f^{-1}(a).f^{-1}(b))\in f(E_{1})\subset E_{2}$, so
 $(a.b)d\in E_{2}$, hence $d= e_{1}$. It follows that $f(f^{-1}(a).f^{-1}(b))=a.b$. Since $f$ is bijective then
 $f^{-1}(a).f^{-1}(b)=f(a.b)$.
\end{proof}
\medskip

\begin{lem} Let $f : G_{1} \longrightarrow G_{2}$ be a homomorphism of partial group. Then $f(e) = e$; $f(a^{*})
\in Inv(f(a))$;  and $f(a.b^{*}).e = (f(a)).(f(b))^{*}$, for all
$a,b \in G_{1}$.
\end{lem}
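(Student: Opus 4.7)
The plan is to establish the three claims in sequence, since each one feeds into the next.

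First I would prove $f(e)=e$ by a small trick on supports and defects. The identity $e$ lies in both $E_{1}$ and $D_{1}$ (it is the neutral element of the ambient group, and by definition $e\in D_{1}$). Since $f$ is a partial group homomorphism, we are given $f(E_{1})\subset E_{2}$ and $f(D_{1})\subset D_{2}$, so $f(e)\in E_{2}\cap D_{2}$. But $E_{2}$ and $D_{2}$ come from a free pair, so $E_{2}\cap D_{2}=\{e\}$, yielding $f(e)=e$ at once. This avoids any direct manipulation of the homomorphism identity in the opening step.

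Next I would derive $f(a^{*})\in Inv(f(a))$ by applying the defining relation to the pair $(a,a^{*})$. Write $a=xd$ with $x\in E_{1}$, $d\in D_{1}$ and $a^{*}=x^{-1}d'$ by definition of $Inv(a)$. Then $a.a^{*}=xx^{-1}=e$, so
\[
f(a).f(a^{*}) \;=\; f(a.a^{*}).e \;=\; f(e).e \;=\; e.e \;=\; e,
\]
using the first claim and Proposition~\ref{p:1}. Now write $f(a)=yd''$ and $f(a^{*})=y'd'''$ in $G_{2}$; by Proposition~\ref{p:2}(i) the relation $f(a).f(a^{*})=e$ forces $y'=y^{-1}$, which is exactly the statement $f(a^{*})\in Inv(f(a))=\{y^{-1}\delta:\delta\in D_{2}\}$.

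The third identity is then almost immediate: apply the homomorphism condition to the pair $(a,b^{*})$ to get $f(a.b^{*}).e=f(a).f(b^{*})$, and note that the second claim lets us legitimately write $f(b^{*})=(f(b))^{*}$, since $f(b^{*})$ is a specific element of $Inv(f(b))$ and the star notation by convention designates any such element.

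The only step with any subtlety is the second one, where one has to be careful not to confuse $Inv(a)$ (a set) with a chosen representative; the equation $f(a).f(a^{*})=e$ in $G_{2}$ must be decoded through the $E_{2}$-$D_{2}$ decomposition before Proposition~\ref{p:2}(i) can be invoked. The other two assertions are essentially bookkeeping once the first one is in hand.
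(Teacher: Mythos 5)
Your proof is correct, and it is worth noting where it diverges from what the paper actually does. The paper never proves this lemma where it is stated; the substance is absorbed into Lemma~\ref{L:mm3}, whose proof of $f(e_1)=e_2$ is the computation $f(e_1)=f(x.x^{-1}).e_2=f(x).f(x^{-1})=f(x)(f(x))^{-1}=e_2$, i.e.\ it routes the claim through the fact that $f_{/E_1}$ is a group homomorphism into $E_2$. Your argument for $f(e)=e$ is genuinely different: you observe $e\in E_1\cap D_1$, push it through the definitional inclusions $f(E_1)\subset E_2$ and $f(D_1)\subset D_2$, and land in $E_2\cap D_2=\{e\}$ by freeness. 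This is tighter, and it quietly repairs a small gap in the paper's version: the identity $f(e_1).e_2=e_2$ only pins down the $E_2$-component of $f(e_1)$, so one still needs $f(e_1)\in E_2$ to conclude, a point the paper leaves implicit. The trade-off is that your route leans on $f(D_1)\subset D_2$ being part of the definition of homomorphism (it is, in the formal definition of Section~8, though the introduction suggests the authors sometimes treat the inclusions as derived rather than assumed); the paper's route only needs $f(E_1)\subset E_2$. Your second claim is proved essentially as in Lemma~\ref{L:mm3}(iii) --- reduce $a.a^{*}=e$ through the homomorphism identity and decode $f(a).f(a^{*})=e$ via Proposition~\ref{p:2}(i) --- and your third claim, for which the paper supplies no argument at all, is handled correctly, including the necessary remark that $f(a).(f(b))^{*}$ is independent of the representative chosen in $Inv(f(b))$ because the partial product depends only on $E_2$-parts.
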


 Now, if $f : G_{1} \longrightarrow
G_{2}$ is a homomorphism, we define the image of $f$ to be the
subset $$Im(f):=\{b \in G_{2} :\ \ b = f(a)\ \mathrm{for\ some} \
a\in G_{1}\}\ \ of \ \ G_{2},$$ and the kernel of $f$ to be the
subset $$Ker(f):=\{a\in G_{1} :\ \ \  f(a) = e\}\ \ \ \mathrm{of}\
\ G_{1}.$$

\medskip

\begin{lem}\label{L:mm3} Let $G_{1}=E_{1}.D_{1}$ and $G_{2}=E_{2}.D_{2}$ be two partial groups and
 $f : G_{1} \longrightarrow G_{2}$ be an homomorphism of partial group. Then:\ \\ (i) $f_{/E_{1}}: E_{1}\longrightarrow E_{2}$
 is a homomorphism of group.\ \\ (ii)
 $f(e_{1})=e_{2}$.\ \\ (iii) $f(a^{*})\in Inv(f(a))$ for every $a\in
 G_{1}$.\ \\ (iv) for every $a=xd\in G_{1}$, with $x\in E_{1}$ and
 $d \in D_{1}$, one has $f(a)=f(x)d''$, for some $d''\in D_{2}$.
\end{lem}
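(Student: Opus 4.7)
The plan is to handle the four assertions in order, deducing (ii)--(iv) from the group-homomorphism structure on the support established in (i).

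For (i), the key observation is that the restriction of the partial law to $E_1$ coincides with the group law of $E_1$: if $x,y\in E_1$, then writing $x = x\cdot e_1$ and $y = y\cdot e_1$ with $e_1\in D_1$, the definition of the partial product gives $x.y = xy$, which already lies in $E_1$. By Proposition~\ref{p:1}(v), $(x.y).e_1 = x.y$, and from $f(E_1)\subset E_2$ the element $f(x.y)$ belongs to $E_2$, hence equals $(f(x.y)).e_2$. The defining property $f(x.y).e_2 = f(x).f(y)$ of a partial-group homomorphism then reads $f(x.y) = f(x).f(y)$, which is exactly the group homomorphism condition for $f_{/E_1}\colon E_1\to E_2$.

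Once (i) is in hand, (ii) is immediate, since a group homomorphism sends identity to identity and $e_1\in E_1$. For (iii), take $a=xd\in G_1$ and any $a^{*}=x^{-1}d'\in \mathrm{Inv}(a)$ (using Proposition~\ref{p:2}(ii)). Then $a.a^{*} = x.x^{-1} = e_1$ by the partial law, so the homomorphism identity gives
\[
f(a).f(a^{*}) \;=\; f(a.a^{*}).e_2 \;=\; f(e_1).e_2 \;=\; e_2.
\]
Writing $f(a)=ud''$ and $f(a^{*})=vd'''$ in $G_2=E_2.D_2$, Proposition~\ref{p:2}(i) forces $v=u^{-1}$, which is exactly the statement that $f(a^{*})\in \mathrm{Inv}(f(a))$.

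For (iv), write $a=xd$ with $x\in E_1$, $d\in D_1$, and decompose $f(a)=u d''$ with $u\in E_2$, $d''\in D_2$; the task is to identify $u$ with $f(x)$. By Proposition~\ref{p:1}(ii), $a.e_1 = x$ in the partial law, and also $f(a).e_2 = u$. Applying the homomorphism identity to the pair $(a,e_1)$ gives
\[
f(x) \;=\; f(a.e_1) \;=\; f(a.e_1).e_2 \;=\; f(a).f(e_1) \;=\; f(a).e_2 \;=\; u,
\]
where the second equality uses $f(x)\in E_2$ together with Proposition~\ref{p:1}(v), and the fourth uses (ii). This yields $f(a)=f(x).d''$ as required; the degenerate case $a\in D_1$ (so $x=e_1$) works identically since then both sides reduce to $e_2.d''$.

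The only real subtlety is notational: the symbol ``$.$'' means the \emph{partial} product throughout, whereas $xd$ inside a decomposition $a=xd$ denotes multiplication in the ambient group $\Gamma$. Keeping these straight, and invoking Proposition~\ref{p:1}(ii),(v) at the right moments to rewrite $a.e$ and to strip or insert trailing $e$'s on elements known to sit in the support, is the one place where a careless step would break the argument.
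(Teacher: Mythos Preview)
Your argument is correct and follows essentially the same route as the paper: both establish (i) by noting that on $E_1$ the partial law is the group law and that $f(x.y)\in E_2$ allows one to drop the trailing $e_2$, then derive (ii)--(iv) from the homomorphism identity applied to $x.x^{-1}$, $a.a^{*}$, and $a.e_1$ respectively. The only cosmetic difference is that the paper writes out (ii) explicitly as $f(e_1)=f(x.x^{-1}).e_2=f(x)(f(x))^{-1}=e_2$, whereas you invoke the standard fact that group homomorphisms preserve identities---the same computation underneath.
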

\medskip

\begin{proof} (i) $f(xy)= f(x.y).e_{2}=f(x).f(y)=f(x)f(y)$ for every $x,y\in E_{1}$.\ \\
(ii)
$f(e_{1})=f(x.x^{-1}).e_{2}=f(x).f(x^{-1})=f(x)(f(x))^{-1}=e_{2}$.\
\\ (iii) Let $a\in G_{1}$. We have
$e_{2}=f(e_{1})=f(a.a^{*})=f(a.a^{*}).e_{2}=f(a).f(a^{*})$, so
$f(a^{*})\in Inv(f(a^{*}))$.\ \\ (iv) We have $f(a.e_{1})=f(x)\in
E_{2}$, so $f(a.e_{1})=f(a.e_{1}).e_{2}=f(a).f(e_{1})=f(a).e_{2}$.
Hence $f(a)=f(x).d''$ for some $d''\in D_{2}$.
\end{proof}
\medskip

Define the default kernel of $f$
$\widetilde{Ker(f)}=f^{-1}(D_{2})$.

We have $Ker(f)\subset \widetilde{Ker(f)}$ since $e_{2}\in D_{2}$,
so $f^{-1}(e_{2})\subset f^{-1}(D_{2})$.

\medskip

\begin{prop}\label{p:06543} Let $f : G_{1} \longrightarrow G_{2}$ be an
isomorphism of partial group. Then the default kernel
$\widetilde{Ker(f)}$ and the image $Im(f)$ are respectively normal
partial subgroup  of $G_{1}$ and partial subgroup of $G_{2}$.
\end{prop}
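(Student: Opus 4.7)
The plan is to verify the two claims separately, treating $\widetilde{Ker(f)}$ first (which is the more delicate one) and $Im(f)$ second.

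First I would check that $\widetilde{Ker(f)}=f^{-1}(D_{2})$ is a partial subgroup of $G_{1}$, by checking the three axioms of Definition of partial subgroup. Since $e_{2}\in D_{2}$ and $f(e_{1})=e_{2}$ by Lemma~\ref{L:mm3}.(ii), the identity lies in $\widetilde{Ker(f)}$. For closure, given $a,b\in\widetilde{Ker(f)}$, I use that $a.b\in E_{1}$ by Proposition~\ref{p:1}.(i), so $f(a.b)\in f(E_{1})\subset E_{2}$; combined with $f(a.b).e_{2}=f(a).f(b)$ and Proposition~\ref{p:2}.(i) applied to the two elements $f(a),f(b)\in D_{2}$ (whose $E_{2}$-components are $e_{2}$), this forces $f(a).f(b)=e_{2}$, hence $f(a.b)=e_{2}\in D_{2}$. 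For the inverse condition, pick any $a^{*}\in Inv(a)$; by Lemma~\ref{L:mm3}.(iii), $f(a^{*})\in Inv(f(a))$, and since $f(a)\in D_{2}$ has trivial $E_{2}$-part, $Inv(f(a))=D_{2}$ by Proposition~\ref{p:2}.(ii). Thus $f(a^{*})\in D_{2}$ and so $a^{*}\in\widetilde{Ker(f)}$.

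Next, for normality, I would use Proposition~\ref{p:121001}.(d), i.e.\ reduce to showing $x^{-1}Fx=F$ for every $x\in E_{1}$, where $F$ is the support of $\widetilde{Ker(f)}$. By Proposition~\ref{p:4} and Corollary~\ref{C:1}, $F=E_{1}\cap\widetilde{Ker(f)}=\{y\in E_{1}:f(y)\in D_{2}\}$. But for $y\in E_{1}$ we have $f(y)\in E_{2}$, so $f(y)\in E_{2}\cap D_{2}=\{e_{2}\}$ by freeness; hence $F=\mathrm{Ker}(f_{/E_{1}})$. The key observation now is that $f_{/E_{1}}:E_{1}\to E_{2}$ is an ordinary group homomorphism (Lemma~\ref{L:mm3}.(i)), so its kernel $F$ is automatically normal in $E_{1}$, giving $x^{-1}Fx=F$ for all $x\in E_{1}$ and finishing normality via Proposition~\ref{p:121001}.

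Finally, for $Im(f)$, I would verify the three partial-subgroup axioms directly: $e_{2}=f(e_{1})\in Im(f)$; for $b_{1}=f(a_{1}),b_{2}=f(a_{2})\in Im(f)$ one has $b_{1}.b_{2}=f(a_{1}).f(a_{2})=f(a_{1}.a_{2}).e_{2}=f(a_{1}.a_{2})$ (the last equality because $a_{1}.a_{2}\in E_{1}$ so its image lies in $E_{2}$, where the partial law acts trivially with $e_{2}$); and for inverses, given $b=f(a)\in Im(f)$, Lemma~\ref{L:mm3}.(iii) gives $f(a^{*})\in Inv(b)\cap Im(f)$.

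The main obstacle I expect is the normality step, specifically the identification of the support $F$ with $\mathrm{Ker}(f_{/E_{1}})$. All the other verifications are direct manipulations of the definitions, but recognizing that the freeness of $D_{2}$ with $E_{2}$ collapses $F$ to an honest group-theoretic kernel is what makes Proposition~\ref{p:121001}.(d) applicable without further effort.
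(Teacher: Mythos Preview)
Your argument is correct, and for the partial-subgroup verifications (identity, closure, inverses) of both $\widetilde{Ker(f)}$ and $Im(f)$ it matches the paper's proof almost line for line.

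The one genuine difference is the normality step. The paper uses criterion~(c) of Proposition~\ref{p:121001}: for $b\in\widetilde{Ker(f)}$ and $a\in G_{1}$ it computes directly
\[
f(a^{*}.b.a)=f(a^{*}).f(b).f(a)=(f(a))^{*}.d.f(a)=(f(a))^{*}.f(a)=e_{2},
\]
so $a^{*}.b.a\in Ker(f)\subset\widetilde{Ker(f)}$. You instead invoke criterion~(d): you identify the support $F=E_{1}\cap\widetilde{Ker(f)}$ with $\mathrm{Ker}(f_{/E_{1}})$ via the freeness relation $E_{2}\cap D_{2}=\{e_{2}\}$, and then appeal to the standard fact that kernels of group homomorphisms are normal. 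Both routes are short; yours has the advantage of making transparent \emph{why} normality holds (it is inherited from ordinary group theory on the supports), while the paper's direct computation avoids the detour through Corollary~\ref{C:1} and the identification of $F$. What you flagged as the ``main obstacle'' is in fact a clean observation that the paper does not make explicit.
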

\medskip

\begin{proof} $\diamond$ $\widetilde{Ker(f)}$ is a partial subgroup of $G_{1}$: \ \\ - By Lemma
~\ref{L:mm3},(i), we have $e_{1}\in \widetilde{Ker(f)}$.\ \\ -
Take $a,b \in \widetilde{Ker(f)}$. Then $f(a), f(b)\in D_{2}$.
Write $f(a)=d$ and $f(b)=d'$, so  $f(a.b).e_{2}
=f(a).f(b)=d.d'=e_{2}$. Then $f(a.b)\in D_{2}$. \ \\ - Let $a\in
\widetilde{Ker(f)}$ and $a^{*}\in Inv(a)$, so $f(a^{*})\in
Inv(f(a))$ (By Lemma~\ref{L:mm3},(ii)). Write $f(a)=d\in D_{2}$
then by definition $Inv(f(a))=Inv(d)=\{d' :\ \ d'\in
D_{2}\}=D_{2}$, so
  $a^{*}\in f^{-1}(D_{2})$. It
follows that $Inv(a)\subset \widetilde{Ker(f)}$.

We conclude that $\widetilde{Ker(f)}$ is a partial subgroup of
$G_{1}$.\ \\

 Suppose that $b\in \widetilde{Ker(f)}$ and $a\in G$. Write $f(b)=d\in D_{2}$. Then $f(a^{*}.b.a)=
f(a^{*})·f(b).f(a) = (f(a))^{*}.d. f(a) = (f(a))^{*}.f(a)=e_{2}$,
so $a^{*}.b.a\in ker(f) \subset \widetilde{Ker(f)}$. Hence,
$\widetilde{Ker(f)}$ is a normal subgroup of $G_{1}$.\ \\
\\ $\diamond$ $Im(f)$ is a normal partial subgroup of
$G_{2}$: \
\\ - By Lemma ~\ref{L:mm3},(i), we have $f(e_{1})=e_{2}\in Im(f)$.\
\\ - Take $a,b \in Im(f)$. Write $a=f(x).d$ and
$b=f(y).d'$ for some $x,y\in E_{1}$ and $d,d'\in D_{2}$. Since
$f(E_{1})\subset E_{2}$ then $f(x), f(y)\in E_{2}$, so
$a.b=f(x)f(y)=f(xy)\in Im(f)$.
\
\\ - Let $a\in Im(f)$ and $a^{*}\in Inv(a)$. Write $a=f(x)d$ with $x\in
E_{1}$ and $d\in D_{2}$. By definition, $Inv(a)=\{f(x)^{-1}d':\ \
d'\in D_{2} \}$, so $a^{*}=f(x)^{-1}d'$ for some $d'\in D_{2}$. By
Lemma~\ref{L:mm3}.(i), $f(x)^{-1}=f(x^{-1})\in Im(f)$ then
$a^{*}\in Im(f)$, so $Inv(a)\subset Im(f)$.

We conclude that $Im(f)$ is a partial subgroup of $G_{2}$.\ \
\end{proof}
\medskip

\begin{prop}\label{Lad:011} Let $H=F.D'$ and $K=F'.D''$ two
partial subgroups of $G=E.D$. Then:\ \\ (i) $H.K=F.F'$.\ \\ (ii)
$aH=xF$  and $Ha=Fx$ for every $a=xd\in G$ and $x\in E$.\ \\ (iii)
$H$ is normal in $G$ if and only if $F$ is normal in $E$.\ \\ (iv)
if $H$ is normal then $G_{/H}=E_{/F}$.\ \\ (v) $H\cap K=(F\cap
F').(D'\cap D'').$\ \\ (vi) For every $a\in H$, $aH=e.H=Ha$, where
$e$ is the identity element of $H$.
\end{prop}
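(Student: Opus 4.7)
The plan is to prove each item (i)--(vi) by unwinding the definition of the partial inner law and exploiting the uniqueness of the decomposition $a=xd$ guaranteed by Proposition \ref{p:00101}(i). Most items reduce to straightforward computations once one remembers that for any $a=xd,\ b=yd'\in G$ the product $a.b$ is simply $xy\in E$; the only subtlety is in (iv), where one must promote a set-theoretic bijection to an isomorphism of partial groups.

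For (i), I would take $h=xd\in H$ (with $x\in F,\ d\in D'$) and $k=yd'\in K$ (with $y\in F',\ d'\in D''$); by definition $h.k=xy\in F.F'$, which gives $H.K\subset F.F'$. Conversely, since $e\in D'$ and $e\in D''$, the elements $x=xe$ and $y=ye$ lie in $H$ and $K$ respectively, so $xy=x.y\in H.K$. For (ii), writing $a=xd$ and $h=yd'$ with $y\in F,\ d'\in D'$, the product $a.h=xy$ ranges exactly over $xF$ as $y$ ranges over $F$; the right-coset statement is symmetric. Item (v) follows from Proposition \ref{p:00101}(i): if $a\in H\cap K$ and we write $a=xd=yd''$ with $x\in F\subset E$, $d\in D'\subset D$, $y\in F'\subset E$, $d''\in D''\subset D$, then $x=y$ and $d=d''$, so $x\in F\cap F'$ and $d\in D'\cap D''$; the reverse inclusion is immediate.

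For (iii) I would invoke Proposition \ref{p:121001}, which says $H$ is normal in $G$ iff $x^{-1}Fx=F$ for every $x\in E$; this is by definition the statement that $F$ is normal in $E$. For (iv), assuming $H$ is normal, the map $\Phi:E_{/F}\longrightarrow G_{/H}$, $xF\longmapsto xH$, is well-defined by (ii), since $aH=xF$ when $a=xd$ shows that the $H$-coset depends only on the $F$-coset of $x$; it is surjective because every $a\in G$ has the form $a=xd$ so $aH=xH=\Phi(xF)$, and injective because $xH=x'H$ forces $xF=x'F$ again by (ii). Compatibility with the quotient operations follows from the identity $(xH)(x'H)=(x.x')H=(xx')H$ and the analogous rule in $E_{/F}$. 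Finally (vi): if $a\in H$ then $a=xd'$ with $x\in F$, so by (ii) $aH=xF=F$; the computation $e.h=ey=y$ for $h=yd'\in H$ shows $eH=F$; and $Ha=Fx=F$ similarly.

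The main obstacle is (iv): one must verify that the natural bijection $xF\mapsto xH$ is actually a homomorphism of partial groups in the sense of the paper, i.e.\ respects the coset multiplication as defined in Section 7. This hinges on the fact that the quotient operation is computed via representatives and that every coset $aH$ of $G_{/H}$ has a canonical representative in $E$ (namely $a.e=x$), so that the quotient structure on $G_{/H}$ collapses to the ordinary group quotient $E_{/F}$; parts (ii) and (iii) make this identification rigorous without further surprise.
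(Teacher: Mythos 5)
Your proposal is correct and follows essentially the same route as the paper: decompose every element as $xd$, use the uniqueness of that decomposition (Proposition~\ref{p:00101}(i)) together with the formula $a.b=xy$ to reduce each item to a statement about $F$, $F'$ and $E$. The only divergences are cosmetic --- you derive (iii) from Proposition~\ref{p:121001}(d) where the paper reads it off directly from the coset formulas in (ii), and your treatment of (iv) via the explicit map $xF\longmapsto xH$ is in fact more careful than the paper's one-line identification of the cosets.
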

\medskip

\begin{proof}(i) We have $H.K=\{a.b :\ \ a\in H,\ b\in K\}$. If $a=xd\in H$ and $b=yd'\in K$ with $x\in F$ and $y\in F'$, then
 $a.b=xy$, so
$H.K=\{x.y :\ \ x\in F,\ y\in F'\}$.\ \\ (ii) Write $a=xd$ with
$x\in F$ and $d\in D$ then $$aH=\{a.b :\ \ \ b\in K\}=\{xy :\ \ \
y\in F'\}=xF.$$ It follows then $Ha=Fx$ with the same proof.\ \\
(iii) Suppose that $H$ is normal in $G$ then $aH=Ha$ for every
$a=xd\in \in G$. By (i) we have $aH=xF$ and $Ha=Fx$ so $xF=Fx$ foe
every $x\in E$. Hence, $F$ is normal subgroup in $E$. We use the
same proof for the converse. \ \\ (iv) If $H$ is normal partial
subgroup in $G$ then by (iii), $F$ is normal subgroup in $E$. Let
$[a]\in G_{/H}$ with $a=xd\in G$ and $[a]=aH$. By (ii), $aH=xF$,
so $[a]\in E_{/F}$. The same proof is used for the converse.\ \\
(v) It is clear that $(F\cap F').(D'\cap D'')\subset H\cap K$. For
the converse, let $a\in H\cap K$. Then there exists $x\in F$,
$y\in F'$, $d\in D'$ and $d'\in D''$ such that $a=xd=yd'$. By
Proposition ~\ref{p:4}, we have $D'\cup D''\subset D$ and $F\cup
F'\subset E$. By proposition ~\ref{p:00101}, $x=y$ and $d=d'$. It
follows that $x\in F\cap F'$ and $y\in D'\cap D''$.\ \\ (vi) Let
$a\in K$ and write $a=xd$ with $x\in F$ and $d\in D'$, then
$aH=\{a.b:\ \ b\in H\}=\{xy:\ \ y\in F\}=xF=F$. In the same way we
have $F=Fx=Ha$.
\end{proof}
\medskip

\begin{proof}[Proof of Theorem~\ref{t:1}]  By Proposition ~\ref{p:06543}, the default kernel and default image of $f$ be
$K = \widetilde{ker(f)}$; $H = Im(f)$; respectively a normal
partial subgroup of $G_{1}$ and a patial subgroup of $G_{2}$.
Write $G_{1}=E_{1}D_{1}$, $G=E_{2}D_{2}$, $K=F_{1}D'_{1}$ and
$H=F_{2}D'_{2}$. Then there is a natural isomorphism
$\widetilde{f} : G_{/K}\ \longrightarrow H$; given by
$aK\longmapsto f(a)$. Denote by $\widetilde{e}=eK$. The map
$\widetilde{f}$ is well defined because if $a'K = aK$ then $a'.e =
a.k$ for some $k \in K$ and so

$$f(a').e  = f(a.k)= f(a).f(k) = f(a).\widetilde{e}= f(a)$$
\medskip

  The map $\widetilde{f}$ is a homomorphism of partial group because $f$ is a homomorphism of partial group,

\begin{align*}
   \widetilde{f}(aK.a'K).e_{2} &  = \widetilde{f}((a.a')K).e_{2} \ \ \ \ \mathrm{by\ definition \ of \ coset \ multiplication} \\
  \ &  = f(a.a').e_{2} \ \ \ \ \mathrm{by\ definition \ of} \ \widetilde{f}\\
  \ &  = f(a).f(a') \ \ \ \ \mathrm{because} \ f \ \mathrm{is \ a \ homomorphism\ of\ partial\ group}\\
  \ & = \widetilde{f}(aK).\widetilde{f}(a'K) \ \ \ \mathrm{ by\ definition} \ of\ \ \widetilde{f}
\end{align*}

To show that $\widetilde{f}$ injects, it suffces to show that
$ker(\widetilde{f})$ is only the trivial element $\widetilde{e}$
of $G_{/K}$. Compute that if $\widetilde{f}(aK) = \widetilde{e}$
then $f(a) = \widetilde{e}$, and so $a \in K$. It follows by
Proposition ~\ref{Lad:011},(vi) that $aK = eK=\widetilde{e}$. The
map $\widetilde{f}$ surjects because $H = Im(f)$.
\end{proof}
\bigskip

\begin{proof}[Proof of Theorem~\ref{t:2}] Write $H=F.D'_{1}$ and $K=F'.D'_{2}$. By proposition ~\ref{Lad:011}.(i), we have
$HK=FF'$ and by  By proposition ~\ref{Lad:011}.(iv), we have
$HK_{/K}=FF'_{/F'}$ which is a group. It can be considered as a
partial group with support $E_{2}:=FF'_{/F'}$ and defect
$D_{2}:=\{e\}$. Routine verifications show that $HK$ is a partial
group having $K$
 as a normal partial subgroup and that $H\cap K$ is a normal partial subgroup of $H$.
 The map $f:\ H \longrightarrow HK_{/K}$ given by  $h \longmapsto hK$ is a surjective homomorphism having kernel $H\cap
 K$. In this case the default kernel of $f$ is equal its kernel
 since $D_{2}=\{e_{2}\}$.
  Therefore, the first theorem gives an isomorphism $H_{/(H \cap K)} \longrightarrow HK_{/K}$ given by
  $h(H\cap K)\longmapsto hK$.  The desired isomorphism is the inverse of the
isomorphism in the display.
\end{proof}

\medskip

\begin{proof}[Proof of Theorem~\ref{t:3}]  Write $G=E.D$, $K=F.D'$ and $N=F'.D''$. By proposition ~\ref{Lad:011}.(iv),
we have $G_{/N}=E_{/F'}$,  $K_{/N}=F_{/F'}$ and $G_{/K}=E_{/F}$
which are  groups. By applying the third theorem of group
isomorphism we have the map $$f:\ (E_{/F'})_{/(F_{/F'})}
\longrightarrow E_{/F}:\  \ xF'.(E_{/F'}) \longmapsto xF$$ is an
isomorphism. By Proposition ~\ref{Lad:011}.(ii), the map $f$ is
also defined as follow:

$$f:\ (G_{/N})_{/(K_{/N})} \longrightarrow G_{/K}:\  \ aN.(K_{/N})
\longmapsto aK.$$ The proof is completed
\end{proof}

\bibliographystyle{amsplain}
\vskip 0,4 cm

\end{document}